\newtheorem{corollary}{Corollary}
\newtheorem{lemma}{Lemma}
\renewcommand{\epsilon}{\varepsilon}
\newcommand{\ds}{\displaystyle}
\newtheorem{thm}{Theorem}
\author[1]{Toni Sayah}
\author[2]{Toufic El Arwadi}
\affil[1]{Laboratoire de Math\'ematiques et Applications, Unit\'e
de recherche Math\'ematiques et
Mod\'elisation, CAR, Facult\'e des Sciences, Universit\'e Saint-Joseph de Beyrouth, B.P 11-514 Riad El Solh, Beyrouth 1107 2050, Lebanon\\

(toni.sayah@usj.edu.lb)}
\affil[2]{Department of Mathematics and Computer Science, Faculty of Science, Beirut Arab University, P.O. Box 11-5020, Beirut, Lebanon\\

(t.elarwadi@bau.edu.lb)}
\title{Exponential decay of the discrete energy for the wave-wave coupled system}
\date{}
\begin{document}

\maketitle
\begin{abstract}
\noindent In this article,  a numerical analysis of the asymptotic
behavior of the discrete energy associated to a dissipative
coupled wave system is conducted. The numerical approximation of
the system is constructed using the P1 finite element method for
spatial discretization, combined with the implicit Euler scheme
for time integration.  An a priori error analysis is established,
showing that, under extra regularity assumptions on the continuous
solution, the numerical scheme exhibits linear convergence. Then,
for the first time in the literature,  the exponential decay of
the fully discrete energy is shown using the energy method.
\end{abstract}
\section{Introduction}
The dissipation of energy for the damped wave equation has
received a lot of attention since the pioneering results of
Komornik \cite{Lagnese} and Lagnese \cite{Komornik}. More
specifically, let $\Omega$ be a bounded regular domain and
$u:\Omega\times (0,+\infty)\rightarrow \mathbb{R}$ the unique
solution of the following wave equation
\begin{equation} \label{onde}
\left\{
\begin{array}{lcl}
u_{tt}-  \Delta u +\alpha u_t =0 && \mbox{ on } \Omega \times (0,\infty), \\

u(x,t)=0 && \mbox{ on } \Gamma \times [0,\infty]
\end{array}
\right.
\end{equation}
The corresponding energy is $\displaystyle
E(t)=\frac{1}{2}\int_\Omega u_t^2 dx+\frac{1}{2}\int_\Omega
|\nabla u|^2dx$. Clearly $\ds \frac{d}{dt} E(t)=-\int_\Omega
\alpha u_t^2<0$. This shows that the energy is non-increasing and
approaches zero. Moreover, when $\alpha$ is constant or a strictly
bounded positive function, it is known that there exist two
positive constants  $M$ and $\kappa$ such that
\[E(t)\leq Me^{-\kappa t}. \]
What this model describes can easily be extended to the coupled
damped wave system which consists of two wave equations weakly
coupled through a frictional interaction. The coupled system takes
the form:
\begin{equation} \label{PDE}
\left\{
\begin{array}{lcl}
u_{tt}- c^2 \Delta u + \varepsilon_u u_t + \alpha u - \alpha v =0 && \mbox{ on } \Omega \times (0,T), \\
v_{tt}- c^2 \Delta v + \varepsilon_v v_t + \alpha v - \alpha u =0 && \mbox{ on } \Omega \times (0,T), \\
u(x,0) = u_0(x), \; u_t(x,0) = u_1(x) && \mbox{ on } \Omega, \\
v(x,0) = v_0(x), \; v_t(x,0) = v_1(x) && \mbox{ on } \Omega,
\\
u(x,t)=0 && \mbox{ on } \Gamma \times [0,T],\\
v(x,t)=0 && \mbox{ on } \Gamma \times [0,T],
\end{array}
\right.
\end{equation}
where $\epsilon_u \geq 0$ and $\epsilon_v\geq 0$ are the damping
coefficients, $\alpha > 0$ is the coupling coefficient,  $u_0.
u_1, v_0$ and $v_1$ are given functions.

\noindent The general stabilization properties of the system have
attracted considerable attention. Specifically, Alabau et al.
\cite{Alabau-1} demonstrated that for $\varepsilon_u = 0$, the
energy of the system decays over time with a polynomial decay
rate. This result was later improved by Lobato et al.
\cite{Lobato}. Furthermore, the asymptotic behavior of coupled
damped wave systems is a subject of considerable literature for
the different configurations of damping (see, for instance,
\cite{KV}).

\noindent Regarding the numerical approach, Lobato et al.
\cite{Lobato} proposed for the first time a fully discrete scheme
for the one-dimensional coupled wave system based on a finite
difference in space and time. In addition, they demonstrate that
the discrete energy satisfies a monotonicity of the form $E^n <
E^{n-1}$. Other authors studied energy decay in dissipative
systems by applying time-implied Euler schemes combined with space
finite element or finite difference discretizations (see, e.g.,
\cite{fernandez, Copetti, Copetti2, mauro, fernandez-2, Aouadi2,
zuazua, Sayah1, Sayah2}).

\noindent To our knowledge, the nature of the exponential decay of
the discrete energy for the coupled damped wave system \eqref{PDE}
has not been studied.
This is the goal we pursue: to develop a numerical scheme based on finite elements and prove the energy decay for the discrete energy is indeed exponential. \\

\noindent In Section 2, we present the system and we show the
exponential decay of the continuous energy, in Section 3 a
numerical scheme is proposed and its well-posdness is proved. The
aim of section 3 is to show an {\it a priori} error estimate. In
section 4, we present the discrete energy and show its exponential
decay in times $t_n$.

\section{exponential decay of the energy}
In this section, we introduce an energy corresponding to the problem \eqref{PDE} and show the corresponding exponential decay.\\

\noindent We define the following energy:
\[
E(t)= \displaystyle \frac{1}{2} \int_\Omega \big( |u_t|^2 +
|v_t|^2 + c^2 |\nabla u |^2 + c^2|\nabla v |^2 + \alpha (u - v)^2
\big).
\]
\begin{lemma}
The positive energy $E$ decays with respect to time.
\end{lemma}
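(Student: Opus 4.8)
The plan is to differentiate the energy $E(t)$ with respect to time and show that the derivative is nonpositive, which establishes that $E$ is non-increasing. First I would compute
\[
\frac{d}{dt}E(t) = \int_\Omega \Big( u_t u_{tt} + v_t v_{tt} + c^2 \nabla u \cdot \nabla u_t + c^2 \nabla v \cdot \nabla v_t + \alpha (u-v)(u_t - v_t) \Big).
\]
The next step is to substitute the equations of motion from \eqref{PDE}, namely $u_{tt} = c^2 \Delta u - \varepsilon_u u_t - \alpha u + \alpha v$ and the analogous expression for $v_{tt}$, into the terms $u_t u_{tt}$ and $v_t v_{tt}$.

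After substitution, I would integrate by parts (Green's formula) on the Laplacian terms. Using the homogeneous Dirichlet boundary conditions $u = v = 0$ on $\Gamma$, the boundary integrals vanish, so that $\int_\Omega c^2 (\Delta u)\, u_t = -\int_\Omega c^2 \nabla u \cdot \nabla u_t$, and similarly for $v$. The plan is that these terms exactly cancel the corresponding gradient terms $c^2 \nabla u \cdot \nabla u_t$ and $c^2 \nabla v \cdot \nabla v_t$ coming from the differentiation of the energy. Likewise, collecting the coupling terms, I expect the contributions $(-\alpha u + \alpha v) u_t + (-\alpha v + \alpha u) v_t = -\alpha (u-v)(u_t - v_t)$ to cancel the term $\alpha (u-v)(u_t - v_t)$ arising from differentiating the $\tfrac{1}{2}\alpha(u-v)^2$ part of the energy.

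What survives this cancellation are exactly the damping contributions, leaving
\[
\frac{d}{dt}E(t) = -\int_\Omega \big( \varepsilon_u u_t^2 + \varepsilon_v v_t^2 \big) \leq 0,
\]
where nonnegativity of the integrand follows from the hypotheses $\varepsilon_u \geq 0$ and $\varepsilon_v \geq 0$. This shows the energy is non-increasing, which is the claim. The calculation itself is routine; the only point requiring care is the bookkeeping of signs when substituting the equations and verifying that every gradient and coupling term produced by differentiating $E$ is matched and annihilated by a term from the PDE, so that the strictly dissipative structure is isolated. I do not anticipate a genuine obstacle here, since this is the standard energy identity; the main thing to get right is ensuring the coupling term $\alpha(u-v)^2$ in the energy is differentiated correctly (the factor $\tfrac12$ together with the chain rule produces exactly one copy of $\alpha(u-v)(u_t-v_t)$) so that it cancels cleanly against the coupling terms in the two equations.
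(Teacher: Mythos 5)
Your proposal is correct and follows essentially the same route as the paper: differentiate $E(t)$, substitute the equations of \eqref{PDE} (equivalently, test the weak form with $u_t$ and $v_t$), and observe that after integration by parts with the homogeneous Dirichlet conditions all gradient and coupling terms cancel, leaving $E'(t) = -\varepsilon_u \|u_t\|^2 - \varepsilon_v \|v_t\|^2 \le 0$. Your write-up is in fact more explicit than the paper's (which compresses the cancellation into the single identity \eqref{energyp}), but there is no difference in method.
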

\begin{proof}
W begin by computing the derivative of the energy $E(t)$:
\begin{equation}\label{energyp}
\begin{array}{rcl}
\quad
E'(t) &=& (u_{tt},u_t) + (v_{tt},v_t) + c^2(\nabla u, \nabla u_t) + c^2(\nabla v, \nabla v_t) \\
\quad
&& + \alpha(u,u_t) + \alpha (v,v_t) - \alpha(v,u_t) - \alpha(u,v_t)\\
&=& - \displaystyle \varepsilon_u ||u_t ||^2 - \displaystyle
\varepsilon_v ||v_t ||^2.
\end{array}
\end{equation}
We deduce the decay of the energy.
\end{proof}
\begin{lemma}
The energy $E$ decays exponentially with time: there exist
positive constants $C$ and $\eta$ such that
\[
E(t) \le C e^{-\eta t} E(0).
\]
\end{lemma}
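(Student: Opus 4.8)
The plan is to use the standard energy-multiplier method for damped wave systems. Since the previous lemma only gives $E'(t) = -\varepsilon_u\|u_t\|^2 - \varepsilon_v\|v_t\|^2$, the dissipation controls only the velocities, not the full energy, so I cannot directly obtain a differential inequality $E' \le -\eta E$. The key idea is to construct a perturbed energy functional $\mathcal{E}(t) = E(t) + \delta \Psi(t)$ for a small parameter $\delta > 0$, where $\Psi$ is an auxiliary multiplier term whose derivative produces the missing positive-definite contributions (namely the gradient terms and the coupling term). For the damped wave equation the canonical choice is $\Psi(t) = (u_t,u) + (v_t,v)$, which upon differentiation yields $\|u_t\|^2 + \|v_t\|^2$ together with terms that, after using the equations \eqref{PDE}, reproduce $-c^2\|\nabla u\|^2 - c^2\|\nabla v\|^2 - \alpha\|u-v\|^2$.

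\medskip

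First I would define $\Psi(t) = (u_t,u) + (v_t,v)$ and compute $\Psi'(t) = \|u_t\|^2 + \|v_t\|^2 + (u_{tt},u) + (v_{tt},v)$. Substituting $u_{tt} = c^2\Delta u - \varepsilon_u u_t - \alpha(u-v)$ and the analogous equation for $v$, and integrating by parts the Laplacian terms (using the homogeneous Dirichlet boundary conditions to kill boundary contributions), gives
\begin{equation*}
\Psi'(t) = \|u_t\|^2 + \|v_t\|^2 - c^2\|\nabla u\|^2 - c^2\|\nabla v\|^2 - \alpha\|u-v\|^2 - \varepsilon_u(u_t,u) - \varepsilon_v(v_t,v).
\end{equation*}
Next I would bound the indefinite cross terms $-\varepsilon_u(u_t,u) - \varepsilon_v(v_t,v)$ using Young's inequality and the Poincar\'e inequality (to control $\|u\|,\|v\|$ by $\|\nabla u\|,\|\nabla v\|$), absorbing a fraction of the gradient terms. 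The upshot is an estimate of the form $\Psi'(t) \le C_1(\|u_t\|^2 + \|v_t\|^2) - \tfrac{1}{2}(c^2\|\nabla u\|^2 + c^2\|\nabla v\|^2 + 2\alpha\|u-v\|^2)$, so that $\Psi'$ dominates (the negative of) the \emph{potential} part of $E$ up to a velocity contribution.

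\medskip

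Then I would form $\mathcal{E}(t) = E(t) + \delta\Psi(t)$ and verify two facts. The equivalence $\tfrac{1}{2}E(t) \le \mathcal{E}(t) \le 2E(t)$ holds for $\delta$ small enough, again via Young and Poincar\'e, since $|\Psi(t)| \le C\,E(t)$. Differentiating and combining the two derivative formulas,
\begin{equation*}
\mathcal{E}'(t) = -\varepsilon_u\|u_t\|^2 - \varepsilon_v\|v_t\|^2 + \delta\Psi'(t) \le -\eta_0\, E(t)
\end{equation*}
for a suitable $\eta_0 > 0$, once $\delta$ is chosen small enough that the $\delta C_1(\|u_t\|^2+\|v_t\|^2)$ term is absorbed into $-\varepsilon_u\|u_t\|^2-\varepsilon_v\|v_t\|^2$ while the $\delta$-multiple of the potential terms survives. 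Using the equivalence again, $\mathcal{E}'(t) \le -\eta\,\mathcal{E}(t)$, and Gr\"onwall's lemma yields $\mathcal{E}(t) \le e^{-\eta t}\mathcal{E}(0)$; translating back through the equivalence gives $E(t) \le C e^{-\eta t} E(0)$.

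\medskip

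The main obstacle I anticipate is the degeneracy when one damping coefficient vanishes, e.g.\ $\varepsilon_u = 0$: then the velocity dissipation controls only $\|v_t\|$, and absorbing $\delta C_1\|u_t\|^2$ into the dissipation is no longer possible term-by-term. In that regime the coupling term $\alpha\|u-v\|^2$ must be exploited to transfer dissipation from the $v$-equation to the $u$-component, which is precisely the delicate mechanism behind the polynomial-decay results of Alabau et al.\ \cite{Alabau-1}. I therefore expect the lemma to be stated (or to implicitly require) $\varepsilon_u > 0$ and $\varepsilon_v > 0$, under which the multiplier argument above goes through cleanly; the careful bookkeeping in the choice of $\delta$ relative to $\varepsilon_u,\varepsilon_v,c^2,\alpha$ and the Poincar\'e constant is the part requiring genuine attention.
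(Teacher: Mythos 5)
Your proposal is correct and follows essentially the same perturbed-energy (Lyapunov) argument as the paper: the paper sets $L(t)=NE(t)+\beta\big((u_t,u)+(v_t,v)\big)$ with $N$ large, which is just a rescaling of your $\mathcal{E}=E+\delta\Psi$ with $\delta$ small, and it carries out the same Young/Poincar\'e absorption and equivalence-plus-Gr\"onwall steps. Your closing observation is also accurate: the paper's choice $N>\beta\max\big(\tfrac{1}{\varepsilon_u}+\tfrac{1}{2\varepsilon_1},\tfrac{1}{\varepsilon_v}+\tfrac{1}{2\varepsilon_2}\big)$ implicitly requires $\varepsilon_u>0$ and $\varepsilon_v>0$, exactly as you predicted, even though the system is introduced with $\varepsilon_u,\varepsilon_v\ge 0$.
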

\begin{proof}
We proceed by several steps:
\begin{enumerate}
\item We introduce the following function:
\[
L(t) = N E(t) + f(t),
\]
where $N$ is a large real number and $f(t) = \beta (u_t, u) +
\beta (v_t, v)$ for a positive real number $\beta$. Then, we have
\begin{equation}\label{Lprim}
L'(t) = N E'(t) + f'(t),
\end{equation}
where
\begin{equation}\label{Epr}
E'(t)= - \displaystyle \varepsilon_u ||u_t ||^2 - \displaystyle
\varepsilon_v ||v_t ||^2
\end{equation}
and
\begin{equation}\label{frp}
\begin{array}{rcl}
\medskip
f'(t) &=& \beta (u_{tt},u) + \beta (v_{tt},v) + \beta (u_t, u_t) + \beta (v_t, v_t)\\
&=& -\beta c^2 ||\nabla u ||^2 - \varepsilon_u \beta  (u_t,u) - \alpha \beta ||u||^2 + \alpha \beta (v,u) + \beta ||u_t ||^2 \\
&& -\beta c^2 ||\nabla v ||^2 - \varepsilon_v \beta  (u_t,u) -
\alpha \beta ||v||^2 + \alpha \beta (u,v) + \beta ||v_t ||^2.
\end{array}
\end{equation}
\item $L$ is equivalent to $E$: There exists two positive real numbers  $K_m$ and $K_M$ such that $K_m E(t) \le L(t) \le K_M E(t)$.\\
Indeed, by using the Cauchy-Schwarz, Poincarr\'e (with constant
$S_p$)  and Young inequalities, we get
\[
\begin{array}{rcl}
\medskip
|L(t)- NE(t)| &=& |f(t)|\\ \medskip
&\le&  \displaystyle \frac{\beta}{2} ||u_t ||^2 + \frac{\beta S_p^2}{2} ||\nabla u ||^2  + \displaystyle \frac{\beta}{2} ||v_t ||^2 + \displaystyle \frac{\beta S_p^2}{2} ||\nabla v ||^2\\
&\le& \beta \min(1, c^2 S_p^2 ) E(t).
\end{array}
\]
For $N$ sufficiently large , we easily deduce the existence of
positive constants $K_m$ and  $K_M$ such that
\begin{equation}\label{equiv}
K_m E(t) \le L(t) \le K_M E(t).
\end{equation}
\item $L'(t) \le - \hat{K} L(t)$ for a given positive constant $\hat{K}$:\\
We consider Relation \eqref{Lprim}, \eqref{Epr} and \eqref{frp}.
We get by using the Cauchy-Schwarz inequality, the Young  relation
$ ab \le \displaystyle \frac{1}{2\varepsilon_i} a^2 +
\frac{\varepsilon_i}{2} b^2$ for the terms $(u_t,u)$ and
$(v_t,v)$, the relation $ -(a-b)^2=-a^2 -b^2 +2ab$, and the
Poincar\'e inequality, the following relation:
\[
\begin{array}{rcl}
\medskip
L'(t) &\le&  \displaystyle (\beta - N \varepsilon_u +
\frac{\varepsilon_u \beta}{2\varepsilon_1}) ||u_t ||^2 + (-\beta
c^2 + \frac{\varepsilon_u \beta S_p^2 \varepsilon_1}{2} ) ||\nabla
u ||^2  )\\ \medskip && \displaystyle (\beta - N \varepsilon_v +
\frac{\varepsilon_v \beta}{2\varepsilon_2}) ||v_t ||^2 + (-\beta
c^2 + \frac{\varepsilon_v \beta S_p^2 \varepsilon_2}{2} ) ||\nabla
u ||^2  ) -\alpha \beta ||u-v||^2.
\end{array}
\]
We choose $\varepsilon_1 < \displaystyle \frac{2 c^2}{
\varepsilon_u S_p^2} $, $\varepsilon_2 < \displaystyle \frac{2
c^2}{ \varepsilon_v S_p^2} $ and $N > \displaystyle  \beta
\max(\frac{1}{\varepsilon_u} + \frac{1}{2\varepsilon_1},
\frac{1}{\varepsilon_v} +
 \frac{1}{2\varepsilon_2})$, and we deduce that there exist a positive constant $\bar{K}$ such that
\[
L'(t) \le - \bar{K} \big(  ||u_t ||^2 + ||v_t ||^2 + c^2 ||\nabla
u ||^2 + c^2 ||\nabla v ||^2 + \alpha ||u - v ||^2 \big).
\]
Hence, we get the relation
\begin{equation}\label{relaL}
L'(t) \le - \hat{K} L(t)
\end{equation}
with $\hat{K}= \displaystyle \frac{2 \bar{K}}{K_m}$.
\item Exponential decay of the energy:\\
Relation \eqref{relaL} deduces the following:
$$ L(t) \le e^{-\hat{K} t} L(0).$$
The relation \eqref{equiv} allows us to deduce the following
exponential decay of the energy
$$E(t) \le \displaystyle \frac{K_M}{K_m} e^{-\hat{K} t} E(0), $$
and then the desired result.
\end{enumerate}
\end{proof}
\section{Finite element method and discrete scheme}\label{chpred}
In order to approximate the solution's of System \eqref{PDE}, we use the finite element method for the space discretizations and the finite differences method for the time discretizations. From now on, we assume that $\Omega$ is a polygon when $d=2$ or polyhedron when $d=3$, so it can be completely meshed.\\

\noindent For the space discretizations,  we consider a regular
(see Ciarlet~\cite{PGC}) family of triangulations $( \mathcal{T}_h
)_h$ of $\Omega$ which is a set of closed, non-degenerate
triangles for $d=2$ or tetrahedra for $d=3$, called elements,
satisfying the following properties:
\begin{itemize}
\item for each $h$, $\bar{\Omega}$ is the union of all elements in
 $\mathcal{T}_h$;
\item the intersection of two distinct elements of
$\mathcal{T}_h$ is either empty, a common vertex, or an entire
common edge (or face when $d=3$);
\item the ratio of the diameter {  $h_{\kappa}$  of an element
$\kappa \in \mathcal{T}_h$  to the diameter  $\rho_\kappa$}  of
its inscribed circle when $d=2$ or ball when $d=3$ is bounded by a
constant independent of $h$. That is, there exists a strictly
positive constant $\sigma$, independent of $h$, such that,
\begin{equation}
\label{eq:reg} { \displaystyle \max_{\kappa \in \mathcal{T}_{h}}
\frac{h_{ \kappa}}{\rho_{ \kappa}} \le \sigma.}
\end{equation}
\end{itemize}
As usual, $h$ denotes the maximal diameter of all elements of $\mathcal{T}_{h}$. \\
%
\noindent To define the finite element functions, let $r$ be a non-negative integer. For each {  $\kappa$ in $\mathcal{T}_{h}$, we denote by  $\mathbb{P}_r(\kappa)$  the space of restrictions to $\kappa$ of polynomials in $d$ variables and total degree at most  $r$.\\
\noindent We approximate the unkows $u$ and $v$ of Problem
\eqref{PDE}  in the spaces $X_{0h}$ defined as
\[
X_{0h}=\left\{v_h\in  {\cal C}^0(\overline{\Omega}),~\text{for
all}~\kappa\in {\cal T}_h,~~v_h|_\kappa\in
\mathbb{P}_1(\kappa)\right\} \bigcap H^1_0(\Omega) .
\]
%
\noindent For time discretization, we devide the interval $[0,T]$ into $M$ segments and we denote by $k$ the time step and by $t_k=nk, k=0, \dots , M.$\\

\noindent The problem \eqref{PDE} can be approximated by the following iterative scheme: \\
Let $u^0_h$ and $v_h^0$ be the approximations of $u_0(x)$ and $v_0(x)$ in $X_{0h}$. We also need to approximate $u_h^1$ and $v_h^1$. Thus, we consider $\bar{u}_h^1$ and $\bar{v}_h^1$ approximations of $u_1(x)$ and $v_1(x)$ in $X_{0h}$ and take $u_h^1=u_h^0 + k \bar{u}_h^1$ and $v_h^1=v_h^0 + k \bar{v}_h^1$. \\
\noindent For each iteration $n$ and having $u_h^{n-1},u_h^n,
v_h^{n-1}$ and $v_h^n$ in $X_{0h}$, compute $(u_h^n,v_h^n)\in
X_{0h}^2$ such that for all $(\phi_h,\psi_h)\in X_{0h}^2$,
\begin{equation}\label{PDEd}
\left\{
\begin{array}{ll}
\medskip
\displaystyle (\frac{u_h^{n+1}- 2 u_h^n + u_h^{n-1}}{k^2}, \phi_h)
+ c^2 (\nabla u_h^{n+1}, \nabla \phi_h) \\ \medskip
\hspace{4cm}+ \displaystyle \varepsilon_u (\frac{u_h^{n+1}-u_h^n}{k}, \phi_h) +\alpha (u_h^{n+1}, \phi_h) - \alpha (v_h^{n+1}, \phi_h)=0, \\
\medskip
\displaystyle (\frac{v_h^{n+1}- 2 v_h^n + v_h^{n-1}}{k^2}, \psi_h) + c^2 (\nabla v_h^{n+1}, \nabla \psi_h)\\
\hspace{4cm}+ \displaystyle \varepsilon_v
(\frac{v_h^{n+1}-v_h^n}{k}, \psi_h) +\alpha (v_h^{n+1}, \psi_h) -
\alpha (u_h^{n+1}, \psi_h)=0.
\end{array}
\right.
\end{equation}
\begin{thm}
For each iteration $n$ and for each $(u_h^{n-1}, u_h^n,
v_h^{n-1},v_h^n) \in X_{0h}^4$, problem \eqref{PDE} admits a
unique solution $(u_h^n,v_h^n) \in X_{0h}^2$.
\end{thm}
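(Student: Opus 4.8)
The plan is to recast the scheme \eqref{PDEd} as a single coercive variational problem posed on the finite-dimensional space $X_{0h}^2$ and then invoke the Lax--Milgram theorem. Since the unknowns are $(u_h^{n+1},v_h^{n+1})$ while $u_h^{n-1},u_h^n,v_h^{n-1},v_h^n$ are given data, I would first multiply each equation of \eqref{PDEd} by $k^2$ and move every term containing only the known iterates to the right-hand side. This turns the scheme into the problem of finding $(u_h^{n+1},v_h^{n+1})\in X_{0h}^2$ such that
\[
a\big((u_h^{n+1},v_h^{n+1}),(\phi_h,\psi_h)\big)=\ell(\phi_h,\psi_h)\qquad\text{for all }(\phi_h,\psi_h)\in X_{0h}^2,
\]
where the bilinear form gathers all the implicit contributions,
\[
a\big((u,v),(\phi,\psi)\big)=(1+\varepsilon_u k+\alpha k^2)(u,\phi)+c^2k^2(\nabla u,\nabla\phi)-\alpha k^2(v,\phi)+(1+\varepsilon_v k+\alpha k^2)(v,\psi)+c^2k^2(\nabla v,\nabla\psi)-\alpha k^2(u,\psi),
\]
and $\ell$ is the linear functional carrying the explicit terms $2u_h^n-u_h^{n-1}+\varepsilon_u k\,u_h^n$ (and the analogous $v$-terms) tested against $(\phi_h,\psi_h)$.

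The two hypotheses of Lax--Milgram then remain to be checked. Continuity of $a$ and of $\ell$ is immediate from the Cauchy--Schwarz inequality, and is in any case automatic because $X_{0h}^2$ is finite-dimensional. The heart of the argument is coercivity, which I would obtain by testing with $(\phi,\psi)=(u,v)$. The diagonal terms produce $(1+\varepsilon_u k+\alpha k^2)\|u\|^2+(1+\varepsilon_v k+\alpha k^2)\|v\|^2+c^2k^2(\|\nabla u\|^2+\|\nabla v\|^2)$, while the two coupling terms combine into $-2\alpha k^2(u,v)$. The key observation is that these recombine with part of the diagonal as
\[
\alpha k^2\big(\|u\|^2+\|v\|^2-2(u,v)\big)=\alpha k^2\|u-v\|^2\ge 0,
\]
so that, using $\varepsilon_u,\varepsilon_v\ge 0$ and $\alpha>0$,
\[
a\big((u,v),(u,v)\big)\ge \|u\|^2+\|v\|^2+c^2k^2\big(\|\nabla u\|^2+\|\nabla v\|^2\big),
\]
which bounds the $H^1_0(\Omega)^2$-norm of $(u,v)$ from below. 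This is precisely the discrete counterpart of the $\alpha(u-v)^2$ term appearing in the continuous energy, and recognizing that the sign-indefinite coupling assembles into this nonnegative square is the main point.

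With continuity and coercivity established, Lax--Milgram yields a unique $(u_h^{n+1},v_h^{n+1})\in X_{0h}^2$ solving the variational problem, hence a unique solution of \eqref{PDEd}. Alternatively, since $X_{0h}^2$ is finite-dimensional the scheme is a square linear system, and coercivity shows that the associated matrix is injective, therefore invertible, giving the same conclusion. The only step requiring genuine care is the bookkeeping of the coefficients when passing from \eqref{PDEd} to the form $a$, together with the verification that the cross-coupling terms carry the sign that generates the square $\|u-v\|^2$ rather than a term of indefinite sign that could destroy coercivity.
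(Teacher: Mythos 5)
Your proposal is correct, and its computational core is exactly the paper's: test with the solution pair itself and observe that the sign-indefinite cross-coupling terms recombine with part of the diagonal into the nonnegative square $\alpha k^2\|u_h^{n+1}-v_h^{n+1}\|^2$, so all terms are nonnegative. The paper merely packages this differently --- rather than assembling a bilinear form and invoking Lax--Milgram, it subtracts two putative solutions, tests the resulting homogeneous system with the difference, concludes the difference vanishes, and uses finite-dimensionality (uniqueness implies existence for a square linear system), which is precisely the alternative you yourself note at the end of your argument.
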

\begin{proof}  Since the problem \eqref{PDEd} is given in a finite-dimensional space $X_{0h}$, it helps to prove the uniqueness of the solution which also leads to the existence.\\
\noindent Let $u_h^{n-1}, u_h^n, v_h^{n-1}$ and $v_h^n$ be given
elements in $X_{0h}$. At each iteration $n$, let $u_{h1}^{n+1}$
and $v^n_{h1}$ be two solutions of Problem \eqref{PDEd} and denote
by $u^{n+1}_h= u^{n+1}_{h1} - u^{n+1}_{h2}$ and $v^{n+1}_h=
v^{n+1}_{h1} - v^{n+1}_{h2}$. Thus, $u_h^{n+1}$ and $v_h^{n+1}$
satisfy the following equations:
\begin{equation}\label{PDEdu}
\left\{
\begin{array}{ll}
\medskip
\displaystyle (\frac{u_h^{n+1}}{k^2}, \phi_h) + c^2 (\nabla u_h^{n+1}, \nabla \phi_h) + \displaystyle \varepsilon_u (\frac{u_h^{n+1}}{k}, \phi_h) +\alpha (u_h^{n+1}, \phi_h) - \alpha (v_h^{n+1}, \phi_h)=0, \\
\medskip
\displaystyle (\frac{v_h^{n+1}}{k^2}, \psi_h) + c^2 (\nabla
v_h^{n+1}, \nabla \psi_h)+ \displaystyle \varepsilon_v
(\frac{v_h^{n+1}}{k}, \psi_h) +\alpha (v_h^{n+1}, \psi_h) - \alpha
(u_h^{n+1}, \psi_h)=0.
\end{array}
\right.
\end{equation}
Taking $\phi_h= u_h^{n+1}$ and $\psi_h=v_h^{n+1}$, and summing the
above two equations get the following:
\[
\begin{array}{ll}
\medskip
\displaystyle \frac{1}{k^2} ||u_h^{n+1}||^2 + c^2 ||\nabla u_h^{n+1}||^2 + \displaystyle \frac{\varepsilon_u}{k} ||u_h^{n+1}||^2 +\frac{1}{k^2} ||v_h^{n+1}||^2 + c^2 ||\nabla v_h^{n+1}||^2 + \displaystyle \frac{\varepsilon_v}{k} ||v_h^{n+1}||^2\\
\hspace{10cm}+\alpha ||u_h^{n+1} - v_h^{n+1}||^2 =0.
\end{array}
\]
he positivity of all the terms on the left-hand side of the last
equation implies that  $u_h^{n+1}=v_h^{n+1}=0$, and consequently,
$(u^{n+1}_{h1}, v^{n+1}_{h1})=(u^{n+1}_{h2}, v^{n+1}_{h2})$. Thus,
we obtain the uniqueness and existence of the solution
$(u^{n+1}_{h}, v^{n+1}_{h})$ of the problem \eqref{PDEd}.
\end{proof}
\section{A priori error analysis}

The objective of this section is to establish an a priori error
estimate. There exists an approximation operator (when $d=2$, see
Bernardi and Girault~\cite{s5} or Cl\'ement~\cite{s3}; when $d =2$
or $d =3$, see Scott and Zhang~\cite{ZC}), $R_h$ in ${\mathcal
L}(W^{1,p}(\Omega)\cap H^1_0(\Omega); X_{0h})$ such that for all
$m=0,1$, $l=0,1$, and all $p\ge 2$,
\begin{equation}\label{proj}
 \forall\, S \in
W^{l+1,p}(\Omega),\,\,|S-R_h(S)|_{W^{m,p}(\Omega)}\leq C(p,m,l)
\,h^{l+1-m}|S|_{W^{l+1,p}(\Omega)},
\end{equation}
where $C$ is a positive constant independent of $h$ and $S$.
\begin{thm}\label{thmapr}
If $u,v$ are the solution of the problem \eqref{PDE} and
$u_h^{n+1}, v_h^{n+1}, 1\le n\le  M-1,$ are the solutions of the
iterative scheme \eqref{PDEd} , then  for all
$(\overline{u}_h^{n+1},\overline{v}_h^{n+1})\in S_h\times S_h$,
the following error estimate holds
\begin{equation}\label{error-1}
\begin{array}{ll}
\medskip
\displaystyle
\sup_{n<M}\left\{\|\hat{u}^{n+1}-\hat{u}_h^{n+1}\|^2+\|\hat{v}^{n+1}-\hat{v}_h^{n+1}\|^2+\|\nabla(u^{n+1}-u_h^{n+1})\|^2
\right. \\ \medskip
\displaystyle \hspace{4cm} \left.
+\|\nabla(v^{n+1}-v_h^{n+1})\|^2+\|(u^{n+1}-v^{n+1})-(u_h^{n+1}-v_h^{n+1})\|^2\right\}
\\\medskip \displaystyle
\leq Ck\sum_{j=0}^{N-1}\Bigg(\|\hat{u}_t^{j+1}-\delta \hat{u}^{j+1}\|^2+\|\hat{u}^{j+1}-\overline{u}_h^{j+1}\|^2+\|\hat{v}_t^{j+1}-\delta \hat{v}^{j+1}\|^2+\|\hat{v}^{j+1}-\overline{v}_h^{j+1}\|^2\\
\hspace{.5cm} \medskip  +\|\nabla(\hat{u}^{j+1}-\overline{u}_h^{j+1})\|^2+\|\nabla(\hat{v}^{j+1}-\overline{v}_h^{j+1})\|^2+\|\nabla(\hat{u}^{j+1}-\delta u^{j+1})\|^2+\|\nabla(\hat{v}^{j+1}-\delta v^{j+1})\|^2\Bigg)\\
\hspace{.5cm} \medskip \displaystyle +\frac{C}{k}\sum_{j=0}^{N-2}\Bigg(\|(\hat{u}^{j+1}-\overline{u}_h^{j+1})-(\hat{u}^{j+2}-\overline{u}_h^{j+2})\|^2+\|(\hat{v}^{j+1}-\overline{v}_h^{j+1})-(\hat{v}^{j+2}-\overline{v}_h^{j+2})\|^2\Bigg)\\
\hspace{.5cm} \medskip \displaystyle +C\sup_{n}(\|\hat{u}^{n+1}-\overline{u}_h^{n+1}\|^2+\|\hat{u}_h^0-\hat{u}^0\|^2+\|\hat{u}^1-\overline{u}_h^1\|^2)\\
\hspace{.5cm} \displaystyle
+C\sup_{n}(\|\hat{v}^{n+1}-\overline{v}_h^{n+1}\|^2+\|\hat{v}_h^0-\hat{v}^0\|^2+\|\hat{v}^1-\overline{v}_h^1\|^2),
\end{array}
\end{equation}
where $C$ is positive constant independent oh $h$, $k$ and
$(u,v)$.
\end{thm}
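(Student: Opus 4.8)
The plan is to run the discrete energy method, testing the error equations with the discrete velocity of the error and reproducing, at the discrete level, the continuous energy identity \eqref{energyp}. Throughout, $\hat w=w_t$ denotes the velocity variable, $\delta w^{n+1}=(w^{n+1}-w^n)/k$ the forward time difference, and $\hat u_h^{n+1}=\delta u_h^{n+1}$ the discrete velocity, so that the inertial term in \eqref{PDEd} is $(\delta u_h^{n+1}-\delta u_h^n)/k$.

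First I would derive the error equation. Writing the weak form of \eqref{PDE} at time $t_{n+1}$ against test functions $\phi_h,\psi_h\in X_{0h}$ (legitimate since $X_{0h}\subset H^1_0(\Omega)$) and subtracting \eqref{PDEd}, I substitute the exact $u_{tt}(t_{n+1})$ and $u_t(t_{n+1})$ by the finite differences of the scheme, sending the truncation defects $\hat u_t^{n+1}-\delta\hat u^{n+1}$ and $\nabla(\hat u^{n+1}-\delta u^{n+1})$, together with their $v$-counterparts, to the right-hand side; these are exactly the consistency contributions in the first sum of \eqref{error-1}. I then split each error through the arbitrary approximation $\overline{u}_h^{n+1}\in X_{0h}$ of the statement, writing $u^{n+1}-u_h^{n+1}=(u^{n+1}-\overline{u}_h^{n+1})+(\overline{u}_h^{n+1}-u_h^{n+1})=:\theta_u^{n+1}+e_u^{n+1}$ with $e_u^{n+1}\in X_{0h}$, and similarly for $v$. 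The projection parts $\theta_u,\theta_v$ are set aside for a final triangle inequality, so the core task is to estimate the discrete errors $e_u,e_v$.

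The key step is to test the $e_u$-equation with $\delta e_u^{n+1}$ and the $e_v$-equation with $\delta e_v^{n+1}$, add them, multiply by $k$ and sum over $n$. By the identity $2a(a-b)=a^2-b^2+(a-b)^2$ the inertial term telescopes into the discrete kinetic energy $\|\delta e_u^{n+1}\|^2$, the stiffness term into $c^2\|\nabla e_u^{n+1}\|^2$, and the two coupling terms combine as $\alpha(e_u^{n+1}-e_v^{n+1},\delta(e_u^{n+1}-e_v^{n+1}))$, telescoping jointly into $\alpha\|e_u^{n+1}-e_v^{n+1}\|^2$; the damping terms $\varepsilon_u\|\delta e_u^{n+1}\|^2$ and $\varepsilon_v\|\delta e_v^{n+1}\|^2$ are nonnegative and kept on the favourable side. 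This rebuilds on the left exactly the discrete energy whose supremum is the left-hand side of \eqref{error-1}.

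It remains to bound the right-hand side, which is where I expect the main difficulty. The consistency defects and the stiffness projection error, paired against $\delta e^{n+1}$ and $\nabla e^{n+1}$, are handled by Cauchy--Schwarz and Young's inequality, with the quadratic factors in $e$ retained for later absorption. The delicate terms are those where the projection error $\theta$ multiplies the discrete velocity $\delta e^{n+1}$: here a summation by parts in time transfers the difference from $e$ onto $\theta$, generating the backward time-differences of $\theta$ with the $1/k$ weight seen in the second sum of \eqref{error-1}, together with endpoint contributions yielding the $\sup_n$ terms and the initial-data errors $\|\hat u_h^0-\hat u^0\|^2$ and $\|\hat u^1-\overline{u}_h^1\|^2$; the boundary norm $\|e^N\|$ is absorbed through Poincar\'e by $\|\nabla e^N\|$. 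After this rearrangement the only surviving $e$-dependent contributions have the form $Ck\sum_n(\|\delta e^{n+1}\|^2+\|\nabla e^{n+1}\|^2)$, which do not match the single endpoint energy on the left and therefore must be controlled by a discrete Gronwall inequality, valid for $k$ small, whose $T$-dependent factor is folded into $C$. A concluding triangle inequality together with the approximation estimate \eqref{proj} converts the $\theta$-norms into the data norms of \eqref{error-1}. The principal obstacle is the bookkeeping of this summation by parts and of the Young splittings: one must track precisely which time difference lands on $\theta$ and which on $e$, and verify that every $e$-term is either of energy type, hence absorbable, or of lower order, hence controllable by Gronwall, since a crude bound on the coupling cross-terms would destroy the independence of $C$ from $k$.
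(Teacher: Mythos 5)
Your proposal is correct in substance and follows the same skeleton as the paper's proof: both pass to the velocity formulation (the paper rewrites the scheme as \eqref{PDEdd} with $\hat{u}_h^{n+1}=\delta u_h^{n+1}$), both extract exactly the consistency defects $\hat{u}_t^{n+1}-\delta\hat{u}^{n+1}$ and $\nabla(\hat{u}^{n+1}-\delta u^{n+1})$, both isolate the troublesome term $(\delta\hat{u}^{n+1}-\delta\hat{u}_h^{n+1},\hat{u}^{n+1}-\overline{u}_h^{n+1})$ and treat it by the summation-by-parts identity of \cite{Copetti} (the paper's Step 5), which is what generates the $\frac{C}{k}\sum_j\|(\hat{u}^{j+1}-\overline{u}_h^{j+1})-(\hat{u}^{j+2}-\overline{u}_h^{j+2})\|^2$ terms, the $\sup_n$ endpoint terms and the initial-data errors, and both close with the discrete Gronwall lemma. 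The one organizational difference is that you perform a Wheeler-type splitting $u^{n+1}-u_h^{n+1}=\theta^{n+1}+e^{n+1}$ and test the discrete error equation with $\delta e^{n+1}$, whereas the paper never splits the error: it tests \eqref{first-1-u}--\eqref{first-1-v} with the full velocity error $\hat{u}^{n+1}-\hat{u}_h^{n+1}$ and brings the arbitrary $\overline{u}_h$ in only on the right-hand side via quasi-orthogonality, so the telescoping identities \eqref{chap-1} and \eqref{u-chap}--\eqref{u-v-chap} carry the consistency corrections explicitly rather than hiding them in the residual of the $e$-equation; the two routes are interchangeable bookkeeping. One detail to adjust: in the theorem the comparison functions $\overline{u}_h^{j+1}$ approximate the \emph{velocities} $\hat{u}^{j+1}=u_t(t_{j+1})$ (every right-hand term of \eqref{error-1} measures $\hat{u}^{j+1}-\overline{u}_h^{j+1}$), while your decomposition takes $\overline{u}_h^{n+1}$ as an approximation of the displacement $u^{n+1}$; with your convention the concluding triangle inequality yields norms of $u^{j+1}-\overline{u}_h^{j+1}$ and of $\delta\theta^{j+1}$ rather than the stated ones, so to land exactly on \eqref{error-1} you should set up the splitting at the velocity level, as the paper effectively does. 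This is a correction of bookkeeping, not a gap in the argument.
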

\begin{proof}
The proof consists of several steps. We start by rewriting the
scheme \eqref{PDEd} as follows:
\begin{equation}\label{PDEdd}
\left\{
\begin{array}{ll}
\medskip
\displaystyle (\frac{\hat{u}_h^{n+1}- \hat{u}_h^{n}}{k},
\overline{u}) + c^2 (\nabla u_h^{n+1}, \nabla \overline{u}) \\
\medskip
\hspace{4cm}+ \displaystyle \varepsilon_u (\hat{u}_h^{n+1}, \overline{u}) +\alpha (u_h^{n+1}, \overline{u}) - \alpha (v_h^{n+1}, \overline{u})=0, \\
\medskip
\displaystyle (\frac{\hat{v}_h^{n+1}- \hat{v}_h^{n}}{k}, \overline{v}) + c^2 (\nabla v_h^{n+1}, \nabla \overline{v})\\
\hspace{4cm}+ \displaystyle \varepsilon_v (\hat{v}_h^{n+1},
\overline{v}) +\alpha (v_h^{n+1}, \overline{v}) - \alpha
(u_h^{n+1}, \overline{v})=0,
\end{array}
\right.
\end{equation}
where $u_h^{n+1}=u_h^n+k \hat{u}_h^{n+1}$ and $v_h^{n+1}=v_h^n+k  \hat{v}_h^{n+1}$.\\
\noindent
In addition, we introduce the following weak form of \eqref{PDE}:\\
\begin{equation}\label{PDEW}
\left\{
\begin{array}{ll}
\medskip
\displaystyle (u_{tt}, \overline{u}) + c^2 (\nabla u, \nabla \overline{u}) +\varepsilon_u (u_t, \overline{u}) +\alpha (u-v, \overline{u})=0, \\
\medskip
\displaystyle (v_{tt}, \overline{v}) + c^2 (\nabla v, \nabla
\overline{v})  + \varepsilon_v (v_t, \overline{v}) +\alpha (v-u,
\overline{v})=0.
\end{array}
\right.
\end{equation}
Later, we denote by $\hat{u}=u_t$ and $\hat{u}^{n+1}=u_t(t_{n+1})$.\\

\noindent \textit{\underline{Step 1}}: \\
By substructing \eqref{PDEdd}$_1$ and \eqref{PDEW}$_1$, we get for
all $\overline{u}_h$ and $\overline{v}_h$ in $X_h$,
\begin{equation}\label{first}
\begin{array}{ll}
\medskip
\displaystyle
\left(\hat{u}_t^{n+1}-\frac{1}{k}\left[\hat{u}_h^{n+1}-\hat{u}_h^n\right],\overline{u}_h\right)+c^2\left(\nabla
u^{n+1}-\nabla u_h^{n+1},\nabla \overline{u}_h\right) \\ \medskip
\hspace{1cm}+ \displaystyle
\varepsilon_u\left(\hat{u}^{n+1}-\hat{u}_h^{n+1},\overline{u}_h\right)
+ \alpha (u^{n+1}-v^{n+1}-\left[u_h^{n+1}-v_h^{n+1}\right],
\overline{u}_h)=0.
\end{array}
 \end{equation}
By adding and substructing $\hat{u}^{n+1}$, taking into
consideration the above identity, taking
$\overline{u}_h=\hat{u}_h^{n+1}$ in \eqref{PDEdd}, we get :
\begin{equation}\label{first-1-u}
\begin{array}{ll}
\medskip
\displaystyle
\left(\hat{u}_t^{n+1}-\frac{1}{k}\left[\hat{u}_h^{n+1}-\hat{u}_h^n\right],\hat{u}^{n+1}-\hat{u}_h^{n+1}\right)+c^2\left(\nabla(u^{n+1}-u_{h}^{n+1}),\nabla(\hat{u}^{n+1}-\hat{u}_h^{n+1})\right)

\\ \medskip
\hspace{0cm} \displaystyle
+\varepsilon_u\left(\hat{u}^{n+1}-\hat{u}_h^{n+1},\hat{u}^{n+1}-\hat{u}_h^{n+1}\right)+\alpha\left((u^{n+1}-u_h^{n+1})-(v^{n+1}-v_h^{n+1}),
\hat{u}^{n+1}-\hat{u}_h^{n+1}\right) \\ \medskip
\hspace{0cm} \displaystyle =
\left(\hat{u}_t^{n+1}-\frac{1}{k}\left[\hat{u}_h^{n+1}-\hat{u}_h^n\right],\hat{u}^{n+1}-\overline{u}_h\right)+c^2\left(\nabla(u^{n+1}-u_{h}^{n+1}),\nabla(\hat{u}^{n+1}-\overline{u}_h)\right)\\
\medskip
\hspace{0cm} \displaystyle
+\varepsilon_u\left(\hat{u}^{n+1}-\hat{u}_h^{n+1},\hat{u}^{n+1}-\overline{u}_h\right)+\alpha\left((u^{n+1}-u_h^{n+1})-(v^{n+1}-v_h^{n+1}),
\hat{u}^{n+1}-\overline{u}_h\right).
\end{array}
\end{equation}
In a similar we get
\begin{equation}\label{first-1-v}
\begin{array}{ll}
\medskip
\displaystyle
\left(\hat{v}_t^{n+1}-\frac{1}{k}\left[\hat{v}_h^{n+1}-\hat{v}_h^n\right],\hat{v}^{n+1}-\hat{v}_h^{n+1}\right)+c^2\left(\nabla(v^{n+1}-v_{h}^{n+1}),\nabla(\hat{v}^{n+1}-\hat{v}_h^{n+1})\right)

\\ \medskip
\hspace{0cm} \displaystyle
+\varepsilon_v\left(\hat{v}^{n+1}-\hat{v}_h^{n+1},\hat{v}^{n+1}-\hat{v}_h^{n+1}\right)+\alpha\left((v^{n+1}-v_h^{n+1})-(u^{n+1}-u_h^{n+1}),
\hat{v}^{n+1}-\hat{v}_h^{n+1}\right) \\ \medskip
\hspace{0cm} \displaystyle =
\left(\hat{v}_t^{n+1}-\frac{1}{k}\left[\hat{v}_h^{n+1}-\hat{v}_h^n\right],\hat{v}^{n+1}-\overline{v}_h\right)+c^2\left(\nabla(v^{n+1}-v_{h}^{n+1}),\nabla(\hat{v}^{n+1}-\overline{v}_h)\right)\\
\medskip
\hspace{0cm} \displaystyle
+\varepsilon_v\left(\hat{v}^{n+1}-\hat{v}_h^{n+1},\hat{v}^{n+1}-\overline{v}_h\right)+\alpha\left((v^{n+1}-v_h^{n+1})-(u^{n+1}-u_h^{n+1}),
\hat{v}^{n+1}-\overline{v}_h\right).
\end{array}
\end{equation}
%

\noindent \textit{\underline{Step 2}}: \\
By noting that
\begin{equation}\label{chap-1}
\begin{array}{ll}
\medskip
\displaystyle
\left(\hat{u}_t^{n+1}-\frac{1}{k}\left[\hat{u}_h^{n+1}-\hat{u}_h^n\right],\hat{u}^{n+1}-\hat{u}_h^{n+1}\right)
%
\\ \medskip
=
\displaystyle\left(\hat{u}_t^{n+1}-\frac{1}{k}\left[\hat{u}^{n+1}-\hat{u}^n\right],\hat{u}^{n+1}-\hat{u}_h^{n+1}\right)
+\frac{1}{2k}\|\hat{u}^{n+1}-\hat{u}_h^{n+1}-(\hat{u}^n-\hat{u}_h^n)\|^2
\\ \medskip
\hspace{2cm}\displaystyle
+\frac{1}{2k}\left[\|\hat{u}^{n+1}-\hat{u}_h^{n+1}\|^2-\|\hat{u}^n-\hat{u}_h^n\|^2\right],
\end{array}
\end{equation}
 deriving similar identity for $v$ and adding \eqref{first-1-u}  and \eqref{first-1-v} , neglecting some positive terms leads to

\begin{equation}\label{chap-2}
\begin{array}{ll}
\medskip
\displaystyle
\frac{1}{2k}\|\hat{u}^{n+1}-\hat{u}_h^{n+1}-(\hat{u}^n-\hat{u}_h^n)\|^2+\frac{1}{2k}\left[\|\hat{u}^{n+1}-\hat{u}_h^{n+1}\|^2
-\|\hat{u}^n-\hat{u}_h^n\|^2\right]
\\ \medskip
\hspace{.5cm}\displaystyle
+\left(\hat{u}_t^{n+1}-\frac{1}{k}[\hat{u}^{n+1}-\hat{u}^n],
\hat{u}^{n+1}-\hat{u}_h^{n+1}\right)+c^2\left(\nabla(u^{n+1}-u_h^{n+1}),\nabla(\hat{u}^{n+1}-\hat{u}_h^{n+1})\right)\\
\medskip
\hspace{.5cm}\displaystyle
+\varepsilon_u\|\hat{u}^{n+1}-\hat{u}_h^{n+1}\|^2+\alpha\left((u^{n+1}-u)-(v^{n+1}-v_h^{n+1}),(\hat{u}^{n+1}-\hat{u}_h^{n+1})-(\hat{v}^{n+1}-\hat{v}_h^{n+1})\right)\\
\medskip
\hspace{.5cm}\displaystyle
+\frac{1}{2k}\|\hat{v}^{n+1}-\hat{v}_h^{n+1}-(\hat{v}^n-\hat{v}_h^n)\|^2+\frac{1}{2k}\left[\|\hat{v}^{n+1}-\hat{v}_h^{n+1}\|^2-\|\hat{v}^{n}-\hat{v}_h^{n}\|^2\right]
\\ \medskip
\hspace{.5cm}\displaystyle
+\left(\hat{v}_t^{n+1}-\frac{1}{k}[\hat{v}^{n+1}-\hat{v}^n],
\hat{v}^{n+1}-\hat{v}_h^{n+1}\right)+c^2\left(\nabla(v^{n+1}-v_h^{n+1}),\nabla(\hat{v}^{n+1}-\hat{v}_h^{n+1})\right)
\\ \medskip
\displaystyle \le
\left(\hat{u}_t^{n+1}-\frac{1}{k}[\hat{u}_h^{n+1}-\hat{u}_h^n],
\hat{u}^{n+1}-\overline{u}_h\right)+c^2\left(\nabla(u^{n+1}-u_h^{n+1}),\nabla(\hat{u}^{n+1}-\overline{u}_h)\right)\\
\medskip \hspace{.5cm}\displaystyle
+\varepsilon_u(\hat{u}^{n+1}-\hat{u}_h^{n+1},\hat{u}^{n+1}-\overline{u}_h)
+\varepsilon_v(\hat{v}^{n+1}-\hat{v}_h^{n+1},\hat{v}^{n+1}-\overline{v}_h)\\
\medskip \hspace{1cm}\displaystyle +\alpha
\left((u^{n+1}-u_h^{n+1})-(v^{n+1}-v_h^{n+1}),(\hat{u}^{n+1}-\overline{u}_h)-(\hat{v}^{n+1}-\overline{v}_h)\right)\\
\medskip \hspace{.5cm}\displaystyle
+\left(\hat{v}_t^{n+1}-\frac{1}{k}\left[\hat{v}_h^{n+1}-\hat{v}_h^n\right],
\hat{v}^{n+1}-\overline{v}_h\right)+c^2\left(\nabla(v^{n+1}-v_h^{n+1}),\nabla(\hat{v}^{n+1}-\overline{v}_h)\right).
\end{array}
\end{equation}

\noindent \underline{\textit{Step 3}}\\
Using the definitions of $\hat{u}_h^{n+1}$ and $\hat{v}_h^{n+1}$,
the following identities are valid,
\begin{equation}\label{u-chap}
\begin{array}{ll}
    \medskip
\displaystyle
\left(\nabla(u^{n+1}-u_h^{n+1}),\nabla(\hat{u}^{n+1}-\hat{u}_h^{n+1})\right)=\left(\nabla(u^{n+1}-u_h^{n+1}),\nabla(\hat{u}^{n+1}-\frac{u^{n+1}-u^n}{k})\right)\\
\medskip \hspace{1cm}\displaystyle
+\frac{1}{2k}\left[\|\nabla(u^{n+1}-u_h^{n+1})-\nabla(u^n-u_h^n)\|^2\right]
+\frac{1}{2k}\left[\|\nabla(u^{n+1}-u_h^{n+1})\|^2-\|\nabla(u^n-u_h^n)\|^2\right],
\end{array}
\end{equation}
\begin{equation}\label{v-chap}
\begin{array}{ll}
    \medskip
\displaystyle
\left(\nabla(v^{n+1}-v_h^{n+1}),\nabla(\hat{v}^{n+1}-\hat{v}_h^{n+1})\right)=\left(\nabla(v^{n+1}-v_h^{n+1}),\nabla(\hat{v}^{n+1}-\frac{v^{n+1}-v^n}{k})\right)\\
\medskip \hspace{1cm}\displaystyle
+\frac{1}{2k}\left[\|\nabla(v^{n+1}-v_h^{n+1})-\nabla(v^n-v_h^n)\|^2\right]
+\frac{1}{2k}\left[\|\nabla(u^{n+1}-u_h^{n+1})\|^2-\|\nabla(u^n-u_h^n)\|^2\right],
\end{array}
\end{equation}
and
\begin{equation}\label{u-v-chap}
\begin{array}{ll}
    \medskip
\displaystyle
\left((u^{n+1}-v^{n+1})-(u_h^{n+1}-v_h^{n+1}), (\hat{u}^{n+1}-\hat{v}^{n+1})-(\hat{u}_h^{n+1}-\hat{v}_h^{n+1})\right)=\\
\medskip \displaystyle \hspace{3cm} \left((u^{n+1}-v^{n+1})-(u_h^{n+1}-v_h^{n+1}), (\hat{u}^{n+1}-\hat{v}^{n+1})-\frac{(u^{n+1}-v^{n+1})-(u^n-v^n)}{k}\right)\\
\medskip\displaystyle \hspace{3cm}+\frac{1}{2k}\|[(u^{n+1}-u_h^{n+1})-(v^{n+1}-v_h^{n+1})]-[(u^n-u_h^n)-(v^n-v_h^n)]\|^2\\
\medskip\displaystyle\hspace{3cm}
+\frac{1}{2k}\left[\|(u^{n+1}-u_h^{n+1})-(v^{n+1}-v_h^{n+1})\|^2-\|(u^n-u_h^n)-(v^n-v_h^n)\|^2\right].
\end{array}
\end{equation}
Combining  with \eqref{u-chap}, \eqref{v-chap} and
\eqref{u-v-chap} with \eqref{chap-2} , and applying Cauchy Shwartz
and Young inequality,  give
\begin{equation}\label{chap-4}
\begin{array}{ll}
\medskip
\displaystyle
\frac{1}{2k}\left[\|\hat{u}^{n+1}-\hat{u}_h^{n+1}\|^2
-\|\hat{u}^n-\hat{u}_h^n\|^2\right]
+\frac{1}{2k}\left[\|\hat{v}^{n+1}-\hat{v}_h^{n+1}\|^2
-\|\hat{v}^n-\hat{v}_h^n\|^2\right]
\\ \medskip
\hspace{0.5cm}\displaystyle
+\frac{c^2}{2k}\left[\|\nabla(u^{n+1}-u_h^{n+1})\|^2-\|\nabla(u^n-u_h^n)\|^2\right]
+\frac{c^2}{2k}\left[\|\nabla(v^{n+1}-v_h^{n+1})\|^2-\|\nabla(v^n-v_h^n)\|^2\right]
\\ \medskip
\hspace{0.5cm}\displaystyle
+\frac{\alpha}{2k}\left[\|(u^{n+1}-v^{n+1})-(u_h^{n+1}-v_h^{n+1})\|^2
-\|(u^{n}-v^{n})-(u_h^{n}-v_h^{n})\|^2\right]
\\ \medskip
\hspace{0.cm}\displaystyle \leq  C
\left(\hat{u}_t^{n+1}-\frac{1}{k}[\hat{u}_h^{n+1}-\hat{u}_h^n], \hat{u}^{n+1}-\overline{u}_h\right)+\left(\hat{v}_t^{n+1}-\frac{1}{k}[\hat{v}_h^{n+1}-\hat{v}_h^n], \hat{v}^{n+1}-\overline{v}_h\right)\\
\medskip
\displaystyle\hspace{1cm} \frac{1}{k}\left([\hat{u}^{n+1}-\hat{u}^n]-[\hat{u}_h^{n+1}-\hat{u}_h^n],\hat{u}^{n+1}-\overline{u}_h\right)+\frac{1}{k}\left([\hat{v}^{n+1}-\hat{v}^n]-[\hat{v}_h^{n+1}-\hat{v}_h^n],\hat{v}^{n+1}-\overline{v}_h\right)\\
\medskip\displaystyle\hspace{1cm} +\|\hat{u}^{n+1}-\hat{u}_h^{n+1}\|^2+\|\hat{u}^{n+1}-\overline{u}_h\|^2+\|\hat{v}^{n+1}-\hat{v}_h^{n+1}\|^2+\|\hat{v}^{n+1}-\overline{v}_h\|^2\\
\medskip\hspace{1cm} +\|\nabla(u^{n+1}-u_h^{n+1})\|^2+\|\nabla(\hat{u}^{n+1}-\overline{u}_h)\|^2+\|\nabla(v^{n+1}-v_h^{n+1})\|^2+\| \nabla(\hat{v}^{n+1}-\overline{v}_h)\|^2\\
\medskip\hspace{0.5cm} \|(u^{n+1}-u_h^{n+1})-(v^{n+1}-v_h^{n+1}))\|^2+\|(\hat{u}^{n+1}-\overline{u}_h)-(v^{n+1}-\overline{v}_h)\|^2)
\\ \medskip
\hspace{1cm}\displaystyle
+\left(\hat{u}_t^{n+1}-\frac{1}{k}[\hat{u}^{n+1}-\hat{u}^n],
\hat{u}^{n+1}-\hat{u}_h^{n+1}\right)+\left(\hat{v}_t^{n+1}-\frac{1}{k}[\hat{v}^{n+1}-\hat{v}^n],
\hat{v}^{n+1}-\hat{v}_h^{n+1}\right).

\end{array}
\end{equation}
%
\noindent \underline{\textit{Step 4}}\\
We define the operator $\delta v = \displaystyle \frac{1}{k}(v^{n+1}-v^n)$. \\
\noindent By noting that $$ \left(\hat{u}_t^{n+1}-\delta
\hat{u}_h^{n+1},\hat{u}^{n+1}-\overline{u}_h\right)\leq
C\|\hat{u}^{n+1}-\delta\hat{u}^{n+1}\|^2+C\|\hat{u}^{n+1}-\overline{u}_h\|^2+\left(\delta
\hat{u}^{n+1}-\delta
\hat{u}_h^{n+1},\hat{u}^{n+1}-\overline{u}_h\right),
$$
and using Young inequality with $\varepsilon$ small enough, we get

\begin{equation}\label{chap-5}
\begin{array}{ll}
\medskip
\displaystyle
\frac{1}{2k}\left[\|\hat{u}^{n+1}-\hat{u}_h^{n+1}\|^2
-\|\hat{u}^n-\hat{u}_h^n\|^2\right]
+\frac{1}{2k}\left[\|\hat{v}^{n+1}-\hat{v}_h^{n+1}\|^2
-\|\hat{v}^n-\hat{v}_h^n\|^2\right]
\\ \medskip
\hspace{1cm}\displaystyle
+\frac{c^2}{2k}\left[\|\nabla(u^{n+1}-u_h^{n+1})\|^2-\|\nabla(u^n-u_h^n)\|^2\right]
+\frac{c^2}{2k}\left[\|\nabla(v^{n+1}-v_h^{n+1})\|^2-\|\nabla(v^n-v_h^n)\|^2\right]

\\ \medskip
\hspace{1cm}\displaystyle
+\frac{\alpha}{2k}\left[\|(u^{n+1}-v^{n+1})-(u_h^{n+1}-v_h^{n+1})\|^2
-\|(u^{n}-v^{n})-(u_h^{n}-v_h^{n})\|^2\right]
\\ \medskip
\hspace{0cm}\displaystyle \leq C\|\hat{u}_t^{n+1}-\delta
\hat{u}^{n+1}\|^2+C\|\hat{u}^{n+1}-\overline{u}_h\|^2+C\|\hat{v}_t^{n+1}-\delta
\hat{v}^{n+1}\|^2+C\|\hat{v}^{n+1}-\overline{v}_h\|^2\\ \medskip
\hspace{1cm}\displaystyle +(\delta \hat{u}^{n+1}-\delta
\hat{u}_h^{n+1}, \hat{u}^{n+1}-\overline{u}_h)+(\delta
\hat{v}^{n+1}-\delta \hat{v}_h^{n+1},
\hat{v}^{n+1}-\overline{v}_h)\\ \medskip \hspace{1cm}\displaystyle
+C\varepsilon
\|\hat{u}^{n+1}-\hat{u}_h^{n+1}\|^2+C\|\hat{u}^{n+1}-\overline{u}_h\|^2+C\varepsilon\|\hat{v}^{n+1}-\hat{v}_h^{n+1}\|^2+C\|\hat{v}^{n+1}-\overline{v}_h\|^2\\
\medskip \hspace{1cm}\displaystyle +(\delta \hat{u}^{n+1}-\delta
\hat{u}_h^{n+1},\hat{u}^{n+1}-\overline{u}_h)+(\delta
\hat{v}^{n+1}-\delta
\hat{v}_h^{n+1},\hat{v}^{n+1}-\overline{v}_h)\\ \medskip
\hspace{1cm}\displaystyle
C\|\nabla(u^{n+1}-u_h^{n+1})\|^2+\|\nabla(\hat{u}^{n+1}-\overline{u})\|^2+C\|\nabla(v^{n+1}-v_h^{n+1})\|^2+\|\nabla(\hat{v}^{n+1}-\overline{v})\|^2\\
\medskip \hspace{1cm}\displaystyle
+C\|(u^{n+1}-v^{n+1})-(u_{h}^{n+1}-v_h^{n+1})\|^2+C\|\nabla(\hat{u}^{n+1}-\delta
u^{n+1})\|^2+C\|\nabla(\hat{v}^{n+1}-\delta v^{n+1})\|^2.
\end{array}
\end{equation}

\noindent \textit{\underline{Step 5}}: \\
As in \cite{Copetti}, the following inequality holds for all
$\overline{u}_h^j\in X_h$,
\begin{equation}
\begin{array}{ll}
\medskip
\displaystyle k\sum_{j=0}^n (\delta \hat{u}^{j+1}-\delta
\hat{u}_h^{j+1}, \hat{u}^{j+1}-\overline{u}_h^{j+1})\\ \medskip
= \displaystyle (\hat{u}^{n+1}-\hat{u}_h^{n+1},\hat{u}^{n+1}-\overline{u}_h^{n+1})+(\hat{u}_h^0-\hat{u}^0,\hat{u}^1-\overline{u}_h^1) \\
\medskip
\hspace{1cm} + \displaystyle k\sum_{j=0}^{n-1}(\hat{u}^{j+1}-\hat{u}_h^{j+1},\hat{u}^{j+1}-\overline{u}_h^{j+1}-(\hat{u}^{j+2}-\overline{u}_h^{j+2}))\\
\leq \displaystyle C\|\hat{u}^{n+1}-\hat{u}_h^{n+1}\|^2+C\|\hat{u}^{n+1}-\overline{u}_h^n\|^2+\|\hat{u}_h^0-\hat{u}^0\|^2  +\|\hat{u}^1-\overline{u}_h^1\|^2+Ck\sum_{j=1}^{n-2}\|\hat{u}^{j+1}-\hat{u}_h^{j+1}\|^2\\
\hspace{1cm} \displaystyle
+\frac{C}{k}\sum_{j=1}^{n-2}\|\hat{u}^{j+1}-\overline{u}_h^{j+1}-(\hat{u}^{j+2}-\overline{u}_h^{j+2})\|^2.
\end{array}
\end{equation}
Now, letting
$e_n=\|\hat{u}^n-\hat{u}_h^n\|^2+\|\hat{v}^n-\hat{v}_h^n\|^2+\|\nabla(u^n-u_h^n)\|^2+\|\nabla(v^n-v_h^n)\|^2+\|(u^n-v^n)-(u_h^n-v_h^n)\|^2$,
multiplying by $k$ , taking the sum over $j$, then the following
inequality holds for all $\overline{u}_h^j,\, \overline{v}_h^j\in
S_h$
\begin{equation}\label{principal}
\begin{array}{ll}
\medskip
\displaystyle
e_n-e_0\leq \displaystyle Ck\sum_{j=0}^{n-1}\Bigg(\|\hat{u}_t^{j+1}-\delta \hat{u}^{j+1}\|^2+\|\hat{u}^{j+1}-\overline{u}_h^{j+1}\|^2+\|\hat{v}_t^{j+1}-\delta \hat{v}^{j+1}\|^2+\|\hat{v}^{j+1}-\overline{v}_h^{j+1}\|^2\\
\hspace{.5cm} \displaystyle +\|\hat{u}^{j+1}-\hat{u}_h^{j+1}\|^2+\|\hat{u}^{j+1}-\overline{u}_h^{j+1}\|^2+\|\hat{v}^{j+1}-\hat{v}_h^{j+1}\|^2+\|\hat{v}^{j+1}-\overline{v}_h^{j+1}\|^2\\
\hspace{.5cm} \displaystyle+\|\nabla(u^{j+1}-u_h^{j+1})\|^2+\|\nabla(\hat{u}^{j+1}-\overline{u}_h^{j+1})\|^2+\|\nabla(v^{j+1}-v_h^{j+1})\|^2+\|\nabla(\hat{v}^{j+1}-\overline{v}_h^{j+1})\|^2\\
\hspace{.5cm} \medskip +\|(u^{j+1}-v^{j+1})-(u_h^{j+1}-v_h^{j+1})\|^2+\|\nabla(\hat{u}^{j+1}-\delta u^{j+1})\|^2+\|\nabla(\hat{v}^{j+1}-\delta v^{j+1})\|^2\Bigg)\\
\hspace{.5cm} \medskip \displaystyle +\frac{C}{k}\sum_{j=0}^{n-2}\Bigg(\|(\hat{u}^{j+1}-\overline{u}_h^{j+1})-(\hat{u}^{j+2}-\overline{u}_h^{j+2})\|^2+\|(\hat{v}^{j+1}-\overline{v}_h^{j+1})-(\hat{v}^{j+2}-\overline{v}_h^{j+2})\|^2\Bigg)\\
\hspace{.5cm} \medskip \displaystyle
+C(\|\hat{u}^{n+1}-\overline{u}_h^{n+1}\|^2+\|\hat{u}_h^0-\hat{u}^0\|^2+\|\hat{u}^1-\overline{u}_h^1\|^2
+
\|\hat{v}^{n+1}-\overline{v}_h^{n+1}\|^2+\|\hat{v}_h^0-\hat{v}^0\|^2+\|\hat{v}^1-\overline{v}_h^1\|^2).
\end{array}
\end{equation}
Clearly \eqref{principal} can be rewritten as $$ e_n\leq
C\left(\sum_{j=0}^{n}ke_j +g_n\right),$$ Where
\begin{equation}\label{gn}
\begin{array}{rcl}
\medskip
\displaystyle
\begin{aligned}

g_n= & Ck\sum_{j=0}^{n-1}\Bigg(\|\hat{u}_t^{j+1}-\delta \hat{u}^{j+1}\|^2+\|\hat{u}^{j+1}-\overline{u}_h^{j+1}\|^2+\|\hat{v}_t^{j+1}-\delta \hat{v}^{j+1}\|^2+\|\hat{v}^{j+1}-\overline{v}_h^{j+1}\|^2\\

\medskip & +\|\nabla(\hat{u}^{j+1}-\overline{u}_h^{j+1})\|^2+\|\nabla(\hat{v}^{j+1}-\overline{v}_h^{j+1})\|^2+\|\nabla(\hat{u}^{j+1}-\delta u^{j+1})\|^2+\|\nabla(\hat{v}^{j+1}-\delta v^{j+1})\|^2\Bigg)\\
\medskip & +\frac{C}{k}\sum_{j=0}^{n-2}\Bigg(\|(\hat{u}^{j+1}-\overline{u}_h^{j+1})-(\hat{u}^{j+2}-\overline{u}_h^{j+2})\|^2+\|(\hat{v}^{j+1}-\overline{v}_h^{j+1})-(\hat{v}^{j+2}-\overline{v}_h^{j+2})\|^2\Bigg)\\
\medskip &+C(\|\hat{u}^{n+1}-\overline{u}_h^{n+1}\|^2+\|\hat{u}_h^0-\hat{u}^0\|^2+\|\hat{u}^1-\overline{u}_h^1\|^2)+C(\|\hat{v}^{n+1}-\overline{v}_h^{n+1}\|^2+\|\hat{v}_h^0-\hat{v}^0\|^2+\|\hat{v}^1-\overline{v}_h^1\|^2).
\end{aligned}
\end{array}
\end{equation}
The use of the discrete Gronwall inequality leads to the end of
the proof.
\end{proof}
\noindent As showed in \cite{HSS}, the following estimates holds
\begin{equation}\label{Han}
\frac{1}{k}\sum_{j=0}^{N-2}\|\hat{u}^{j+1}-R_h
\hat{u}^{j+1}-(\hat{u}^{j+2}-R_h \hat{u}^{j+2})\|^2\leq C
h^2\|\hat{u}_t\|_{L^2(0,T,H^1(\Omega))}^2.
\end{equation}
Combining \eqref{proj}, \eqref{Han} and \eqref{error-1} (with
$\overline{u}_h^j= R_h(u^j)$ and $\overline{v}_h^j= R_h(v^j)$)
give the following error estimate
\begin{corollary}
Under the assumption of Theorem \ref{thmapr} and the following
regularity assumptions $(u,v)\in
[W^{3,+\infty}(0,T,L^2(\Omega))\bigcup H^1(0,T,H^2(\Omega))]^2$,
the following a priori error estimate holds,
\begin{equation}\label{error-11}
\|\hat{u}^n-\hat{u}_h^n\|^2+\|\hat{v}^n-\hat{v}_h^n\|^2+\|\nabla(u^n-u_h^n)\|^2+\|\nabla(v^n-v_h^n)\|^2+\|(u^n-v^n)-(u_h^n-v_h^n)\|^2\leq
C(h^2+k^2),
\end{equation}
where $C$ is a positive constant independent of $h$ and $k$.

\end{corollary}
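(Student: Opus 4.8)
The plan is to specialize the abstract estimate \eqref{error-1}, whose right-hand side is precisely the quantity $g_n$ of \eqref{gn}, by choosing the free discrete functions to be Scott--Zhang (Cl\'ement) interpolants. Since every term pairing with $\overline{u}_h$ involves the velocity $\hat{u}=u_t$, I take $\overline{u}_h^{j}=R_h(\hat{u}^{j})$ and $\overline{v}_h^{j}=R_h(\hat{v}^{j})$ at each time level (the interpolants of $u_t(t_j),v_t(t_j)$), and fix the discrete initial data $\hat u_h^0,\hat v_h^0$ as interpolants of $u_1,v_1$, consistent with the initialization $u_h^1=u_h^0+k\bar u_h^1$. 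Because the discrete Gronwall argument has already been performed to obtain \eqref{error-1}, no further iteration is required: it remains only to show that each term of $g_n$ is $O(h^2+k^2)$ uniformly in $n$. I group the terms into temporal consistency errors, spatial interpolation errors, the single $\tfrac{1}{k}$-weighted difference term, and the initial/endpoint contributions.

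For the temporal terms $\|\hat u_t^{j+1}-\delta\hat u^{j+1}\|^2$ and $\|\nabla(\hat u^{j+1}-\delta u^{j+1})\|^2$ (and their $v$-analogues), I would use that $\delta$ is a first-order backward difference: writing $\delta\hat u^{j+1}=\tfrac1k(u_t(t_{j+1})-u_t(t_j))$ and $\hat u_t^{j+1}=u_{tt}(t_{j+1})$, a Taylor expansion with integral remainder gives a pointwise bound of order $k$, hence of order $k^2$ once squared, controlled by $\|u_{ttt}\|_{L^2}$ and by $\|u_{tt}\|_{H^1}$ respectively. These quantities are finite under the stated regularity $W^{3,+\infty}(0,T,L^2(\Omega))$ and $H^1(0,T,H^2(\Omega))$. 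Multiplying by $k$ and summing over the $O(1/k)$ time levels (so that $k\sum_j 1\le T$) turns each into an $O(k^2)$ contribution.

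The spatial terms $\|\hat u^{j+1}-R_h\hat u^{j+1}\|^2$ and $\|\nabla(\hat u^{j+1}-R_h\hat u^{j+1})\|^2$ are handled directly by the interpolation estimate \eqref{proj}: with $p=2$, $l=1$ and $m=0,1$ they are bounded by $Ch^{4}|\hat u^{j+1}|_{H^2}^2$ and $Ch^{2}|\hat u^{j+1}|_{H^2}^2$, the dominant $H^1$-seminorm contribution giving $O(h^2)$ after the weighted sum since $\hat u=u_t\in H^2$. The delicate term is the one carrying the factor $\tfrac{C}{k}$, namely $\tfrac1k\sum_j\|(\hat u^{j+1}-R_h\hat u^{j+1})-(\hat u^{j+2}-R_h\hat u^{j+2})\|^2$: a crude estimate that bounds each $L^2$ difference by $O(h^2)$ and sums the $O(1/k)$ terms against $\tfrac1k$ would only produce $O(h^4/k^2)$, which need not converge as $k\to 0$. \textbf{This is the main obstacle}, and it is resolved precisely by the refined estimate \eqref{Han} from \cite{HSS}, which exploits the time-increment structure of $R_h\hat u^{j+2}-R_h\hat u^{j+1}$ to yield $Ch^2\|\hat u_t\|_{L^2(0,T,H^1(\Omega))}^2=Ch^2\|u_{tt}\|_{L^2(0,T,H^1(\Omega))}^2$, again finite under the hypotheses.

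Finally, the endpoint and initial-data terms $\|\hat u^{n+1}-R_h\hat u^{n+1}\|^2$, $\|\hat u_h^0-\hat u^0\|^2$ and $\|\hat u^1-\overline u_h^1\|^2$ (with their $v$-analogues) are $O(h^4)$ by \eqref{proj} and by the choice of interpolated initial approximations. Collecting the four groups yields $g_n\le C(h^2+k^2)$ uniformly in $n$, and since the left-hand side of \eqref{error-1} is exactly the supremum of the error functional $e_{n+1}$, the claimed bound \eqref{error-11} follows immediately.
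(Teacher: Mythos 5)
Your proposal is correct and follows essentially the same route as the paper, which proves the corollary in one line by combining \eqref{proj}, \eqref{Han} and \eqref{error-1} with the choice of $R_h$-interpolants for $\overline{u}_h^j,\overline{v}_h^j$; your version merely fills in the Taylor-expansion bounds for the temporal consistency terms and correctly identifies \eqref{Han} as the resolution of the $\tfrac{1}{k}$-weighted sum, exactly as the paper intends. Note only that the paper writes $\overline{u}_h^j=R_h(u^j)$, an apparent typo for $R_h(\hat{u}^j)$ (the choice you made, and the one consistent with the terms $\|\hat{u}^{j+1}-\overline{u}_h^{j+1}\|$ in \eqref{error-1} and with \eqref{Han} itself).
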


\section{Exponential decay of the discrete Energy}\label{Energydecay}
In this section, we introduce a discrete energy corresponding to problem \eqref{PDEd} and show the corresponding exponential decay.\\

\noindent Let $(u^{n+1}_{h}, v^{n+1}_{h})$ be the solution of the
problem \eqref{PDEd}. We introduce the following discrete energy:
\begin{equation}\label{energdisc}
E_h^{n+1}= \displaystyle \frac{1}{2} \int_\Omega \big(
|\frac{u_h^{n+1}-u_h^n}{k}|^2 + |\frac{v_h^{n+1}-v_h^n}{k}|^2 +
c^2 |\nabla u_h^{n+1} |^2 + c^2|\nabla v_h^{n+1} |^2 + \alpha
|u_h^{n+1} - v_h^{n+1}|^2 \big).
\end{equation}
\begin{lemma}
The positive energy $E_h^{n+1}$ decays with respect to $n$.
\end{lemma}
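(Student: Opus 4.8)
The plan is to reproduce, at the fully discrete level, the argument of Lemma 1: the discrete analogue of testing against $u_t$ is to test the two equations of the scheme \eqref{PDEd} with the time increments $\phi_h = u_h^{n+1}-u_h^n$ and $\psi_h = v_h^{n+1}-v_h^n$, then add them. The goal is to show that the resulting identity takes the form $E_h^{n+1}-E_h^n + R = 0$, where $R$ is a sum of manifestly nonnegative terms, which immediately yields $E_h^{n+1}\le E_h^n$. The single algebraic tool required is the polarization identity $(a,a-b)=\tfrac12\big(\|a\|^2-\|b\|^2+\|a-b\|^2\big)$, valid in any inner product space, which I would apply term by term.

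For the acceleration term I would write $u_h^{n+1}-2u_h^n+u_h^{n-1}=(u_h^{n+1}-u_h^n)-(u_h^n-u_h^{n-1})$ and pair it with $u_h^{n+1}-u_h^n$; the identity produces $\tfrac12\big(\|\tfrac{u_h^{n+1}-u_h^n}{k}\|^2-\|\tfrac{u_h^n-u_h^{n-1}}{k}\|^2\big)$, which is exactly the kinetic part of $E_h^{n+1}-E_h^n$, plus the nonnegative remainder $\tfrac{1}{2k^2}\|(u_h^{n+1}-u_h^n)-(u_h^n-u_h^{n-1})\|^2$. The stiffness term $c^2(\nabla u_h^{n+1},\nabla(u_h^{n+1}-u_h^n))$ likewise gives $\tfrac{c^2}{2}\big(\|\nabla u_h^{n+1}\|^2-\|\nabla u_h^n\|^2\big)$ plus the nonnegative $\tfrac{c^2}{2}\|\nabla(u_h^{n+1}-u_h^n)\|^2$, and the damping term contributes the nonnegative dissipation $\tfrac{\varepsilon_u}{k}\|u_h^{n+1}-u_h^n\|^2$. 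The same holds verbatim for the $v$-equation.

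The step requiring the most care is the coupling. After summing the two equations, the four coupling contributions collapse into $\alpha\big(u_h^{n+1}-v_h^{n+1},\,(u_h^{n+1}-v_h^{n+1})-(u_h^n-v_h^n)\big)$; setting $w_h^{n+1}=u_h^{n+1}-v_h^{n+1}$ and applying the polarization identity one last time yields $\tfrac{\alpha}{2}\big(\|w_h^{n+1}\|^2-\|w_h^n\|^2\big)+\tfrac{\alpha}{2}\|w_h^{n+1}-w_h^n\|^2$, i.e.\ the coupling part of the energy difference plus a nonnegative term. Collecting everything, the summed identity reads $E_h^{n+1}-E_h^n = -R \le 0$. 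I expect the only real obstacle to be bookkeeping, namely verifying that the coupling terms recombine precisely into $\|w_h^{n+1}-w_h^n\|^2$ and that the telescoping pieces match the definition \eqref{energdisc} of $E_h^{n+1}$ exactly rather than merely up to constants; once this is checked, the nonnegativity of $R$ (which holds for all $\varepsilon_u,\varepsilon_v\ge0$) finishes the proof, with no restriction on the time step $k$.
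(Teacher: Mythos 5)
Your proposal is correct and follows essentially the same route as the paper: testing \eqref{PDEd} with $\phi_h=u_h^{n+1}-u_h^n$, $\psi_h=v_h^{n+1}-v_h^n$ and applying the polarization identity $(a,a-b)=\tfrac12\left(\|a\|^2-\|b\|^2+\|a-b\|^2\right)$ term by term, including the recombination of the four coupling terms into $\alpha\left(w_h^{n+1},w_h^{n+1}-w_h^n\right)$ with $w_h=u_h-v_h$. The paper merely organizes the coupling bookkeeping in two displays rather than one, and likewise obtains unconditional decay with no restriction on $k$.
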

\begin{proof}
We take $\phi_h=u_h^{n+1}-u_h^n$ and $\psi_h = v_h^{n+1}-v_h^n$ in
System \eqref{PDEd} and use the relation $(b-a,b)=\displaystyle
\frac{1}{2}a^2 - \frac{1}{2} b^2 + \frac{1}{2} (b-a)^2$ to get
\[
\begin{array}{rcl}
\medskip
\displaystyle E_h^{n+1} - E_h^{n} &=& \displaystyle -\frac{1}{2} ||\frac{u_h^{n+1}- 2 u_h^n + u_h^{n-1}}{k} ||^2 -\frac{1}{2} ||\frac{v_h^{n+1}- 2 v_h^n + v_h^{n-1}}{k} ||^2 \\
\medskip
&& \displaystyle -\frac{c^2}{2} ||\nabla (u_h^{n+1}- u_h^n)||^2 -\frac{c^2}{2} ||\nabla (v_h^{n+1}- v_h^n)||^2 \\
\medskip
&& \displaystyle -\frac{\varepsilon_u}{k} || (u_h^{n+1}-
u_h^n)||^2 -\frac{\varepsilon_v}{k} || (v_h^{n+1}- v_h^n)||^2 \\
\medskip
&& + \displaystyle \frac{\alpha}{2} ||u_h^{n+1} - v_h^{n+1}||^2  - \frac{\alpha}{2} ||u_h^{n} - v_h^{n}||^2\\
&& - \alpha (u_h^{n+1} - v_h^{n+1}, u_h^{n+1}- u_h^n) -  \alpha
(v_h^{n+1} - u_h^{n+1}, v_h^{n+1}- v_h^n).
\end{array}
\]
The last two terms in the last equation can be written as follows:
\[
\begin{array}{l}
\medskip
\displaystyle - \alpha (u_h^{n+1} - v_h^{n+1}, u_h^{n+1}- u_h^n) -
\alpha (v_h^{n+1} - u_h^{n+1}, v_h^{n+1}- v_h^n) \\ \medskip
\quad = - \alpha (u_h^{n+1} - v_h^{n+1}, (u_h^{n+1} - v_h^{n+1})-(u_h^{n} - v_h^{n}))\\
\quad = - \displaystyle \frac{\alpha}{2} ||u_h^{n+1} -
v_h^{n+1}||^2 + \frac{\alpha}{2} ||u_h^{n} - v_h^{n}||^2 -
\frac{\alpha}{2} ||(u_h^{n+1} - v_h^{n+1})-(u_h^{n} -
v_h^{n})||^2.
\end{array}
\]
Using the last two equations leads to the following relation:
\begin{equation}\label{decayE}
\begin{array}{rcl}
\medskip
\displaystyle E_h^{n+1} - E_h^{n} &=& \displaystyle -\frac{1}{2} ||\frac{u_h^{n+1}- 2 u_h^n + u_h^{n-1}}{k} ||^2 -\frac{1}{2} ||\frac{v_h^{n+1}- 2 v_h^n + v_h^{n-1}}{k} ||^2 \\
\medskip
&& \displaystyle -\frac{c^2}{2} ||\nabla (u_h^{n+1}- u_h^n)||^2 -\frac{c^2}{2} ||\nabla (v_h^{n+1}- v_h^n)||^2 \\
\medskip
&& \displaystyle -\frac{\varepsilon_u}{k} || (u_h^{n+1}-
u_h^n)||^2 -\frac{\varepsilon_v}{k} || (v_h^{n+1}- v_h^n)||^2 \\
\medskip && \displaystyle - \frac{\alpha}{2} ||(u_h^{n+1} -
v_h^{n+1})-(u_h^{n} - v_h^{n})||^2.
\end{array}
\end{equation}
We deduce that $E_h^{n+1}-E_h^n \le 0$ and then the desired
result.
\end{proof}
\begin{lemma}
The energy $E_h^{n+1}$ decays exponentially with respect to the
$time$.
\end{lemma}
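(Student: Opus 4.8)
The plan is to transport the continuous Lyapunov-functional argument to the discrete setting. I would introduce the discrete multiplier
\[
f^{n+1} = \beta\Big(\frac{u_h^{n+1}-u_h^n}{k},\, u_h^{n+1}\Big) + \beta\Big(\frac{v_h^{n+1}-v_h^n}{k},\, v_h^{n+1}\Big),
\]
with $\beta>0$, and set $L^{n+1}=N E_h^{n+1}+f^{n+1}$ for a parameter $N$ to be taken large. The first step is the discrete analogue of the equivalence between $L$ and $E$: applying Cauchy--Schwarz, Young and the Poincar\'e inequality to $f^{n+1}$ exactly as in the continuous proof gives $|f^{n+1}|\le C\beta\, E_h^{n+1}$, so that for $N>C\beta$ one obtains $K_m E_h^{n+1}\le L^{n+1}\le K_M E_h^{n+1}$ with $K_m=N-C\beta$ and $K_M=N+C\beta$.

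The heart of the proof is a discrete decay inequality $(1+\hat K k)\,L^{n+1}\le L^n$. To reach it I would compute $f^{n+1}-f^n$, using the telescoping identity
\[
\Big(\frac{u_h^{n+1}-u_h^n}{k},u_h^{n+1}\Big)-\Big(\frac{u_h^{n}-u_h^{n-1}}{k},u_h^{n}\Big)=\Big(\frac{u_h^{n+1}-2u_h^n+u_h^{n-1}}{k},u_h^{n+1}\Big)+\Big(\frac{u_h^n-u_h^{n-1}}{k},u_h^{n+1}-u_h^n\Big),
\]
and then substituting $\phi_h=u_h^{n+1}$, $\psi_h=v_h^{n+1}$ in \eqref{PDEd} to eliminate the second difference; the two coupling contributions recombine into $-\alpha\beta\|u_h^{n+1}-v_h^{n+1}\|^2$ through $-\|a\|^2-\|b\|^2+2(a,b)=-\|a-b\|^2$. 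Adding $N(E_h^{n+1}-E_h^n)/k$, taken from \eqref{decayE}, then reproduces the continuous computation step by step: the damping terms $-N\varepsilon_u\|\frac{u_h^{n+1}-u_h^n}{k}\|^2$ and $-N\varepsilon_v\|\frac{v_h^{n+1}-v_h^n}{k}\|^2$ come from the energy identity, while $-\beta c^2\|\nabla u_h^{n+1}\|^2$, $-\beta c^2\|\nabla v_h^{n+1}\|^2$ and $-\alpha\beta\|u_h^{n+1}-v_h^{n+1}\|^2$ come from $f^{n+1}-f^n$, supplying all five quadratic quantities in $E_h^{n+1}$. The mixed terms $\varepsilon_u\beta(\frac{u_h^{n+1}-u_h^n}{k},u_h^{n+1})$ are split by Young's inequality with parameters $\varepsilon_1,\varepsilon_2$ chosen small (keeping the $\|\nabla u_h^{n+1}\|^2$, $\|\nabla v_h^{n+1}\|^2$ coefficients negative) and $N$ large (keeping the $\|\frac{u_h^{n+1}-u_h^n}{k}\|^2$, $\|\frac{v_h^{n+1}-v_h^n}{k}\|^2$ coefficients negative), exactly as in the continuous argument. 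This yields $\frac{L^{n+1}-L^n}{k}\le -2\bar K E_h^{n+1}$, and the equivalence converts it into $\frac{L^{n+1}-L^n}{k}\le -\hat K L^{n+1}$ with $\hat K=2\bar K/K_M$.

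Rewriting this as $(1+\hat K k)L^{n+1}\le L^n$ and iterating gives $L^{n+1}\le (1+\hat K k)^{-(n+1)}L^0$. Since $\ln(1+\hat K k)\ge \frac12\hat K k$ for $k$ small enough, the right-hand side is bounded by $e^{-\frac12\hat K t_{n+1}}L^0$, and the equivalence between $L$ and $E_h$ finally yields $E_h^{n+1}\le \frac{K_M}{K_m}\,e^{-\frac12\hat K t_{n+1}}\,E_h^{0}$, which is the announced exponential decay (with both damping coefficients assumed strictly positive, as in the continuous case).

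The step I expect to be the real obstacle is the treatment of the quantities evaluated at the previous time level that are generated by $f^{n+1}-f^n$ and have no continuous counterpart. Dividing the cross term $(\frac{u_h^n-u_h^{n-1}}{k},u_h^{n+1}-u_h^n)$ by $k$ and applying the polarization identity produces the backward quantity $\frac12\|\frac{u_h^n-u_h^{n-1}}{k}\|^2$, which is not part of $E_h^{n+1}$. I would control it by writing $\frac{u_h^n-u_h^{n-1}}{k}=\frac{u_h^{n+1}-u_h^n}{k}-\frac{u_h^{n+1}-2u_h^n+u_h^{n-1}}{k}$ and absorbing the resulting second-difference contribution into the term $-\frac{N}{2k}\|\frac{u_h^{n+1}-2u_h^n+u_h^{n-1}}{k}\|^2$, which \eqref{decayE} supplies with a large $O(N/k)$ coefficient. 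It is precisely here, rather than only in the equivalence step, that the largeness of $N$ together with the smallness of $k$ is genuinely needed.
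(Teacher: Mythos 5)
Your proposal follows the paper's proof essentially step for step: the same discrete Lyapunov functional $L_h^{n+1}=N E_h^{n+1}+f_h^{n+1}$ with the identical multiplier $f_h^{n+1}$, the same equivalence estimate via Cauchy--Schwarz, Young and Poincar\'e, the same telescoping of $f_h^{n+1}-f_h^n$ with $\phi_h=u_h^{n+1}$, $\psi_h=v_h^{n+1}$ inserted into \eqref{PDEd}, and in particular the same rewriting $\frac{u_h^n-u_h^{n-1}}{k}=\frac{u_h^{n+1}-u_h^n}{k}-\frac{u_h^{n+1}-2u_h^n+u_h^{n-1}}{k}$ to absorb the backward term into the second-difference dissipation supplied by \eqref{decayE}, yielding $(1+c_1 k)L_h^{n+1}\le L_h^n$ and hence geometric decay. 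The only differences are cosmetic (you iterate back to $L^0$ instead of $L^1$, divide correctly by $K_M$ in the conversion step, and use $\ln(1+\hat{K}k)\ge \hat{K}k/2$ to obtain a rate uniform in small $k$, where the paper keeps $\gamma=\ln(1+c_1 k)/k$), so this is the paper's argument.
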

\begin{proof}
We proceed by steps:
\begin{enumerate}
\item We introduce the following discrete function
\[
L_h^{n+1} = N E_h^{n+1} + f_h^{n+1},
\]
where $f_h^{n+1} = \displaystyle \beta (\frac{u_h^{n+1}-u_h^n}{k},
u_h^{n+1}) + \beta (\frac{v_h^{n+1}-v_h^n}{k}, v_h^{n+1})$.
\item Let us now show that $L_h^{n+1}$ is equivalent to $E_h^{n+1}$: $C_m E_h^{n+1} \le L_h^{m+1} \le C_M E^{m+1}$ where  $C_m>0$ and $C_M>0$.\\
By using the Cauchy-Schwarz, the Young and Poincar\'{e}'s
inequalities (with constant $S_p$), the definitions of $f_h^{n+1}$
and $E_h^{n+1}$ give the following:
\[
\begin{array}{rcl}
\medskip
| L_h^{n+1} - N E_h^{n+1}| &=& |f_h^{n+1} |\\ \medskip &\le&
\displaystyle \frac{\beta}{2} ||\frac{u_h^{n+1}-u_h^n}{k}||^2 +
\frac{\beta S_p^2}{2} ||\nabla u_h^{n+1}||^2  \\ \medskip
&& \displaystyle +\frac{\beta}{2} ||\frac{v_h^{n+1}-v_h^n}{k}||^2 + \frac{\beta S_p^2}{2} ||\nabla v_h^{n+1}||^2\\
&\le & \displaystyle \beta \min(1,c^2 S_p^2) E_h^{n+1}.
\end{array}
\]
Finally, for $N$ sufficiently large, we obtain
\[
C_m E_h^{n+1} \le L_h^{n+1} \le C_M E_h^{n+1},
\]
where $C_m$ and $C_M$ are positive given constants.

%
%
%
%
\item Then, we show that $\displaystyle \frac{L_h^{n+1} -L_h^n}{k} \le - c_1 \; L_h^{n+1}$ with a positive constant $c_1$:\\
\noindent We have
\[
\begin{array}{rcl}
\medskip
\displaystyle \frac{f_h^{n+1}-f_h^n}{k} &=& \displaystyle \beta (\frac{u_h^{n+1}- 2 u_h^n + u_h^{n-1}}{k^2} , u_h^{n+1}) + \beta (\frac{u_h^n - u_h^{n-1}}{k^2} , u_h^{n+1}-u_h^n) \\
&& \displaystyle + \beta (\frac{v_h^{n+1}- 2 v_h^n +
v_h^{n-1}}{k^2} , v_h^{n+1}) + \beta (\frac{v_h^n -
v_h^{n-1}}{k^2} , v_h^{n+1}-v_h^n).
\end{array}
\]
By using scheme \eqref{PDEd} for $\phi_h = u_h^{n+1}$ and
$\psi_h=v_h^{n+1}$, we obtain the following:
\begin{equation}\label{fnp1n}
\begin{array}{rcl}
\medskip
\displaystyle \frac{f_h^{n+1}-f_h^n}{k} &=& \displaystyle -\beta
c^2 ||\nabla u_h^{n+1}||^2 -\beta \varepsilon_u
(\frac{u_h^{n+1}-u_h^n}{k}, u_h^{n+1}) \\ \medskip
&& \displaystyle -\beta c^2 ||\nabla v_h^{n+1}||^2 -\beta \varepsilon_v (\frac{v_h^{n+1}-v_h^n}{k}, v_h^{n+1}) - \beta \alpha ||v_h^{n+1} - u_h^{n+1}||^2 \\
&& + \displaystyle \beta (\frac{u_h^n - u_h^{n-1}}{k^2} ,
u_h^{n+1}-u_h^n) + \beta (\frac{v_h^n - v_h^{n-1}}{k^2} ,
v_h^{n+1}-v_h^n).
\end{array}
\end{equation}
By using the Cauchy-Schwarz inequality, the Young relation and the
poincarr\'e inequality, the second and fourth  terms of the right
hand side of the last relation can be bounded as follows:
\begin{equation}\label{secdforth}
\begin{array}{rcl}
\medskip
|T_1| &=& \displaystyle \Big| -\beta \varepsilon_u
(\frac{u_h^{n+1}-u_h^n}{k}, u_h^{n+1})  -\beta \varepsilon_v
(\frac{v_h^{n+1}-v_h^n}{k}, v_h^{n+1}) \Big| \\ \medskip
&\le& \displaystyle \frac{\beta \varepsilon_u}{2 \varepsilon_1} || \frac{u_h^{n+1}-u_h^n}{k}||^2 + \frac{\beta \varepsilon_v}{2 \varepsilon_2}  || \frac{v_h^{n+1}-v_h^n}{k} ||^2\\
&& + \displaystyle \frac{\beta \varepsilon_u \varepsilon_1
S_p^2}{2} || \nabla u_h^{n+1} ||^2 + \frac{\beta \varepsilon_v
\varepsilon_2 S_p^2}{2} || \nabla v_h^{n+1} ||^2.
\end{array}
\end{equation}

The last two terms of the relation \eqref{fnp1n} can be treated as
follows:
\begin{equation}\label{lastterm}
\begin{array}{rcl}
\medskip
|T_2| &=& \displaystyle \Big| - \beta (\frac{u_h^{n+1} - 2 u_h^n +
u_h^{n-1}}{k} , \frac{u_h^{n+1}-u_h^n}{k}) - \beta
(\frac{v_h^{n+1} - 2v_h^n + v_h^{n-1}}{k} ,
\frac{v_h^{n+1}-v_h^n}{k}) \Big| \\ \medskip
&& + \displaystyle \beta ||\frac{u_h^{n+1} -  u_h^n}{k}||^2 +
\beta ||\frac{v_h^{n+1} -  v_h^n}{k}||^2\\ \medskip
&\le&  \displaystyle \frac{1}{2} ||\frac{u_h^{n+1}- 2 u_h^n + u_h^{n-1}}{k} ||^2 +\frac{1}{2} ||\frac{v_h^{n+1}- 2 v_h^n + v_h^{n-1}}{k} ||^2\\
&& + \displaystyle (\beta + \frac{\beta^2}{2}) ||\frac{u_h^{n+1} -
u_h^n}{k}||^2 + (\beta + \frac{\beta^2}{2}) ||\frac{v_h^{n+1} -
v_h^n}{k}||^2.
\end{array}
\end{equation}
By considering the relations  \eqref{decayE} divided by $k$,
\eqref{fnp1n}, \eqref{secdforth} and \eqref{lastterm}, by using
the Cauchy-Schwarz and Young inequalities, and by neglecting some
negative terms, we obtain the following bound:
\begin{equation}\label{Lnp1n}
\begin{array}{rcl}
\medskip
\displaystyle \frac{L_h^{n+1} - L_h^n}{k} &\le& \displaystyle
-\frac{1}{2} (N-1) ||\frac{u_h^{n+1}- 2 u_h^n + u_h^{n-1}}{k} ||^2
-\frac{1}{2} (N-1)||\frac{v_h^{n+1}- 2 v_h^n + v_h^{n-1}}{k} ||^2
\\ \medskip
&&+  \displaystyle (-\varepsilon_u k N + \beta + \frac{\beta^2}{2}
+\frac{\beta \varepsilon_u}{2\varepsilon_1})|| \frac{u_h^{n+1}-
u_h^n}{k}||^2  \\ \medskip && \displaystyle + (-\varepsilon_v k N
+ \beta + \frac{\beta^2}{2} + \frac{\beta \varepsilon_v}{2
\varepsilon_2})|| \frac{v_h^{n+1}- v_h^n}{k}||^2 \displaystyle -
\beta \alpha ||v_h^{n+1} - u_h^{n+1}||^2 \\ \medskip
&& + \displaystyle (-\beta c^2 + \frac{\beta \varepsilon_u
\varepsilon_1 S_p^2}{2}) ||\nabla u_h^{n+1}||^2 + (-\beta c^2 +
\frac{\beta \varepsilon_v \varepsilon_2 S_p^2}{2}) ||\nabla
v_h^{n+1}||^2.
\end{array}
\end{equation}
For $\varepsilon_1 < \displaystyle \frac{2c^2}{\varepsilon_u
S_p^2}$, $\varepsilon_2 < \displaystyle \frac{2c^2}{\varepsilon_v
S_p^2}$ and $N$ sufficiently large, we get the following bound:
there exist a positive constant $c_1$ such that,
\[
\displaystyle \frac{L_h^{n+1} - L_h^n}{k} \le -c_1 L_h^{n+1}
\]
\item Exponential decay of the discrete energy: the last
inequality leads to the following:
\[
L_h^{n+1} \le \displaystyle \frac{1}{1+c_1 k} L_h^n
\]
which gives
\[
\begin{array}{rcl}
\medskip
L_h^{n+1} &\le& \displaystyle \frac{1}{(1+c_1 k)^n} L_h^1\\
&\le& \displaystyle e^{-n \ln (1+c_1 k)} L_h^1.
\end{array}
\]
We deduce finally that
\[
E_h^{n+1} \le \displaystyle \frac{C_M}{C_m} e^{-n \ln (1+c_1k)}
E_h^1.
\]
Hence we get the following desired result:
\[
E_h^{n+1} \le \displaystyle C e^{- \gamma t_n}
\]
where $\gamma = \displaystyle \frac{\ln (1+c_1k)}{k}$ and
$C=\displaystyle \frac{C_M}{C_m}  E_h^1$.
\end{enumerate}
\end{proof}


\begin{thebibliography}{1}


\bibitem{Aouadi}{\sc Aouadi M., Copetti M.I, and Fernandez J.R},
{\, A contact problem in thermoviscoelastic diffusion theory with second sound},
{\it ESAIM: Mathematical Modelling and Numerical Analysis,} {\rm 51(3), 759-796
(2017)}

%
\bibitem{Aouadi2}{\sc Aouadi, M., Bazarra, N., Fernandez, J. R., and Rodriguez-Damian, M.},
{\,  Numerical analysis of a problem involving an extensible
thermoelastic beam with microtemperatures}, {\it ZAMM-Journal of
Applied Mathematics and Mechanics,} {\rm 105(1)e202400389 (2025)}
%

\bibitem{Alabau-1}{\sc Alabau F., Cannarsa P., Komornik V.}, {\, Indirect internal stabilization of weakly coupled evolution equations}, {\it J. Evol. Equ.,} {\rm 2,127-150, (2002)}

\bibitem{fernandez-2}{\sc Baldonedo, J., Fernandez, J. R., and Quintanilla, R.},{\,Fully discrete approximations and an a priori error analysis of a two-temperature thermo-elastic model with microtemperatures.}, {\it International Journal of Applied Mathematics and Computer Science, 34(1). (2024)}

%
\bibitem{s5}{\sc Bernardi C.~and~Girault V.}, {\, A local regularisation operation for triangular and quadrilateral finite elements}, {\it SIAM J. Numer.
Anal., \textbf{35},} {\rm 1893-1916, (1998).}


\bibitem{Copetti}{\sc Campo M., Copetti M.I.M, Fernandez J.R and Quintanilla R.}, {\, On existence and numer
ical approximation in phase-lag thermoelasticity with two
temperatures. } {\it Disc. Cont. Dyn.
 Syst. Ser. B,} {\rm 27 (2022), 2221-2245.}


%
\bibitem {PGC} {\sc Ciarlet P. G.,} {\, Basic error estimates for elliptic problems}, {\it Handbook of Numerical Analysis}, {\rm Vol. II, Handbook of Numerical Analysis, 17-351. North-Holland, Amsterdam, (1991).}
%
\bibitem{s3} {\sc Clement P.}, {\, Approximation by finite element
functions using local regularization}, {\it RAIRO Anal. Numer.,}
{\rm 77-84, (1975).}
%

%
\bibitem{Copetti2}{\sc Copetti M.I.M, El Arwadi T., Fernandez JR., Naso M.G., Youssef W.}, {\,Analysis of a contact problem for a viscoelastic Bresse system. } {\it ESAIM: Mathematical Modelling and Numerical Analysis,} {\rm 55(3), 887-911 (2021)}


%
\bibitem{fernandez}{\sc Fernandez, J. R., and Quintanilla, R.}, {\,A type III thermoelastic problem with two dissipative mechanisms. }, {\it Applicable Analysis, }{\rm 104(2), 370-379. (2025)}
%

%
\bibitem{Sayah1}{\sc Djoko, J.K., Koko, J. and Sayah T.}, {\, Energy estimate for Oldroyd-B model under Tresca boundary condition: scheme preserving properties}, {\it Comp. Appl. Math. 43, 422}, {\rm (2024).}

\bibitem{Sayah2}{\sc Djoko, J.K., Koko, J. and Sayah T.}, {\, New Schemes, Positive Definiteness and Energy Preserving Approaches for Oldroyd-B Model Under Slip Boundary Condition}, {\it International Journal of Applied Mathematics, Computational Science and Systems Engineering. 7, {\rm 1-18, (2025).}



\bibitem{HSS}{\sc Han W., Shillor M. and Sofonea M.}, {\, Variational and numerical analysis of a quasistatic viscoelastic problem with normal compliance, friction and damage}, {\it J. Comput. Appl. Math., \textbf{137},} {\rm 2,377-398, (2001)}
%




\bibitem{hecht}
{\sc Hecht F.,} {\, ``New development in freefem++ ''}, {\it
Journal of Numerical Mathematics, De Gruyter}, {\rm 20, 1-14,
(2013).}
%
\bibitem{Komornik}{\sc Komornik V.}, {\, Exact controllability and stabilization}, {Research in Applied Mathematics, \textbf{36},} {\rm Masson, Paris, (1994)}

%
\bibitem{Lagnese}{\sc Lagnese J.E.},{\, Boundary stabilization of thin plates}, {\it  SIAM Studies in Applied Mathematics}, {\rm  10. SIAM (1989).}


%
\bibitem{Lobato}{\sc Lobato R.F., Cordeiro S.M., Santos S., and Junior D.A.},{\, Optimal polynomial decay to coupled wave equations and its numerical properties}, {\it Journal of Applied Mathematics,} {\rm  2014(1), 897080.}
%

\bibitem{mauro}{\sc Madureira R., Rincon M.A., Apolaya R. and Carmo B.}{\, Control of a nonlinear wave equation with a dynamic boundary condition}, {\it Computers and  Mathematics with Applications, } {\rm \textbf{176}, 140-149 (2024)}



%
\bibitem{KV}{\sc Oquendo H.P. and Pachec P.S.}, {Optimal decay for coupled waves with Kelvin-Voigt damping.}, {\it  Applied Mathematics Letters.} {\rm 67, 16-20 (2017)}


%
\bibitem{ZC}{\sc Scott L.R. and Zhan S.}, {\, Finite element interpolation of nonsmooth
functions satisfying boundary conditions}, {\it Math. Comp.,
\textbf{54},} {\rm 483-493, (1990)}
%

\bibitem{zuazua} {Zuazua E.},{\, Propagation, observation, control and numerical approximation of waves}, {\it SIAM Review} {\textbf{47}(2), 197-243 (2005)}}


\end{thebibliography}
\end{document}